\renewcommand\@biblabel[1]{}
\numberwithin{equation}{section}
\newcommand{\beq}{\begin{equation}}
\newcommand{\eeq}{\end{equation}}
\newcommand{\beqs}{\begin{eqnarray*}}
\newcommand{\eeqs}{\end{eqnarray*}}
\newcommand{\beqn}{\begin{eqnarray}}
\newcommand{\eeqn}{\end{eqnarray}}
\newcommand{\beqa}{\begin{array}}
\newcommand{\eeqa}{\end{array}}
\def\lra{\longrightarrow}
\def\bc{\begin{center}}
\def\ec{\end{center}}
\def\begeq{\begin{equation}}
\def\endeq{\end{equation}}
\def\and{\quad{\rm and}\quad}
\let\lra=\longrightarrow
\def\mapright\#1{\,\smash{\mathop{\lra}\limits^{\#1}}\,}
\newtheorem{prop}{Proposition}[section]
\newtheorem{theo}[prop]{Theorem}
\newtheorem{lem}[prop]{Lemma}
\newtheorem{cor}[prop]{Corollary}
\newtheorem{rem}[prop]{Remark}
\newtheorem{defi}[prop]{Definition}
\begin{document}

\date{}
\author {Yuxing Deng }
\author { Xiaohua $\text{Zhu}^*$}

\thanks {* Partially supported by the NSFC Grants 11271022 and 11331001}
\subjclass[2000]{Primary: 53C25; Secondary: 53C55,
58J05}
\keywords { Ricci flow, Ricci soliton, $\kappa$-solution}

\address{ Yuxing Deng\\School of Mathematical Sciences, Beijing Normal University,
Beijing, 100875, China\\
dengyuxing@mail.bnu.edu.cn}

\address{ Xiaohua Zhu\\School of Mathematical Sciences and BICMR, Peking University,
Beijing, 100871, China\\
xhzhu@math.pku.edu.cn}

\title{ 3d steady Gradient Ricci Solitons with linear curvature decay}
\maketitle

\section*{\ }

\begin{abstract} In this note,  we   prove that  a 3-dimensional steady Ricci soliton is rotationally symmetric if  its scalar curvature $R(x)$   satisfies
 $$\frac{C_0^{-1}}{\rho(x)}\le R(x)\le \frac{C_0}{\rho(x)}$$
   for some constant $C_0>0$,   where $\rho(x)$ denotes the distance  from a fixed point $x_0$.
 Our result doesn't assume that the soliton is $\kappa$-noncollapsed.
\end{abstract}

\section{Introduction}

 In his celebrated paper \cite{Pe1},   Perelman conjectured that \textit{all 3-dimensional $\kappa$-noncollapsed steady (gradient) Ricci solitons must be rotationally symmetric}.
The conjecture is  solved by Brendle in 2012 \cite{Br1}.   For a general  dimension $n\ge 3$,  under an extra condition that   the soliton is  asymptotically   cylindrical,    Brendle  also  proves that   any $\kappa$-noncollapsed  steady Ricci soliton  with positive sectional curvature must be rotationally symmetric \cite{Br2}.   In general, it is still open  \textit{whether an $n$-dimensional $\kappa$-noncollapsed steady Ricci soliton with positive curvature operator is rotationally symmetric for $n\ge 4$}.  For $\kappa$-noncollapsed steady  K\"{a}hler-Ricci  solitons with nonnegative bisectional curvature, the authors have recently proved   that they must be flat \cite{DZ3}, \cite{DZ4}.

Recall  from \cite{Br2},

 \begin{defi}\label{brendle} An $n$-dimensional steady  Ricci soliton $(M,g,f)$  is called  asymptotically  cylindrical   if   the following
holds:

(i) Scalar curvature $R(x)$  of $g$ satisfies
$$\frac{C_0^{-1}}{\rho(x)} \le  R(x) \le  \frac{C_0}{\rho(x)},~\forall~\rho(x)\ge r_0, $$
where $C_0>0$  is a  constant and $\rho(x)$ denotes the distance of $x$  from a fixed  point $x_0$.

(ii) Let $p_m$ be an arbitrary sequence of marked points going to infinity.
Consider  rescaled metrics
$ g_m(t) = r_m^{-1} \phi^*_{r_m t} g,$ where
$r_m R(p_m) = \frac{n-1}{2} + o(1)$ and $ \phi_{ t}$ is a one-parameter subgroup generated by $X=-\nabla f$.   As  $m \to\infty,$
 flows $(M,  g_m(t), p_m)$
converge in the Cheeger-Gromov sense to a family of shrinking cylinders
$(   \mathbb R \times \mathbb S^{n-1}(1), \widetilde g(t)), t \in  (0, 1).$  The metric $\widetilde g(t)$  is given by
\begin{align}\label{n-2-behavior}  \widetilde g(t) =   dr^2+ (n - 2)(2 -2t) g_{\mathbb S^{n-1}(1)},
\end{align}
 where $\mathbb S^{n-1}(1)$ is the unit sphere of euclidean space.
\end{defi}

In this note, we   discuss   3-dimensional steady (gradient) Ricci solitons without assuming    the  $\kappa$-noncollapsed conditon.\footnote{It is proved by Chen  that any  3-dimensional ancient solution has nonnegative sectional curvature \cite{Ch}.}    We prove

 \begin{theo}\label{main-theorem}
 Let $(M,g,f)$ be a  3-dimensional steady Ricci soliton. Then, it is rotationally symmetric if the scalar curvature $R(x)$ of $(M,g,f)$ satisfies
 \begin{align}\label{curvature decay}
 \frac{C_0^{-1}}{\rho(x)}\le R(x)\le \frac{C_0}{\rho(x)},
 \end{align}
 for some constant $C >0$,   where $\rho(x)$ denotes the distance  from a fixed point $x_0$.
 \end{theo}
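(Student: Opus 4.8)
The plan is to deduce the theorem from Perelman's conjecture as proved by Brendle \cite{Br1}, i.e.\ from the fact that every $3$-dimensional $\kappa$-noncollapsed steady gradient Ricci soliton is rotationally symmetric; the only thing to establish is therefore that the two-sided bound \eqref{curvature decay} forces $(M,g)$ to be $\kappa$-noncollapsed at all scales for some $\kappa>0$. Equivalently, one verifies directly that $(M,g,f)$ is asymptotically cylindrical in the sense of Definition~\ref{brendle} — condition (i) there being exactly \eqref{curvature decay} — and then invokes \cite{Br2}.

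\textit{Step 1 (positive curvature).} Normalize the soliton equation to $\mathrm{Ric}+\nabla^2 f=0$ and $R+|\nabla f|^2=1$; this is possible after scaling since $R\to 0$ at infinity by \eqref{curvature decay}. By Chen's theorem \cite{Ch} the sectional curvature is nonnegative, so $\nabla^2 f=-\mathrm{Ric}\le 0$: the function $f$ is concave along geodesics, proper, has a unique maximum point $o$ (the tip), and $|\nabla f|\to 1$ at infinity. By Hamilton's strong maximum principle applied to the eternal flow $\varphi_t^*g$, either $\mathrm{Ric}>0$ everywhere, or the universal cover of $M$ splits isometrically as $\mathbb{R}\times N$ with $N$ a complete $2$-dimensional steady soliton, hence the flat plane or Hamilton's cigar; the former case has $R\equiv 0$ and the latter has scalar curvature decaying exponentially along the cigar factor, so both contradict the lower bound in \eqref{curvature decay}. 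Hence $\mathrm{Ric}>0$, $M$ is diffeomorphic to $\mathbb{R}^3$, and every level set $\Sigma_c=f^{-1}(c)$ with $c<f(o)$ is a convex embedded $2$-sphere whose second fundamental form $\mathrm{II}=-|\nabla f|^{-1}\mathrm{Ric}|_{T\Sigma_c}$ is negative definite.

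\textit{Step 2 ($\kappa$-noncollapsedness).} Fix a sequence $p_m\to\infty$, write $\rho_m=\rho(p_m)$, $c_m=f(p_m)$, and $r_m=R(p_m)^{-1/2}$, so that $r_m\asymp\rho_m^{1/2}$ by \eqref{curvature decay}. It suffices to show $\mathrm{Vol}\,B(p_m,r_m)\ge\kappa\,r_m^{3}$ with $\kappa$ independent of $m$: since $R\le C_0/\rho$ bounds the curvature on $B(p_m,r_m)$ by $\lesssim r_m^{-2}$, this is a $\kappa$-noncollapsing estimate at the smallest relevant scale, from which non-collapsedness at all scales follows. We control the cross-sections $\Sigma_c$ as $c\to-\infty$. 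From the first variation of $f$ along geodesics from $o$ and from gradient-flow estimates one gets the crude bound $\rho\asymp|c|$ on $\Sigma_c$. The Gauss equation gives $K_{\Sigma_c}=\mathrm{sec}_M(T\Sigma_c)+\det\mathrm{II}$; in dimension $3$, $\mathrm{sec}_M(T\Sigma_c)=\tfrac12 R-\mathrm{Ric}(\nu,\nu)$ with $\nu=\nabla f/|\nabla f|$, while the soliton identity $\mathrm{Ric}(\nabla f,\cdot)=\tfrac12\nabla R$ together with a Shi-type estimate for the eternal flow gives $|\mathrm{Ric}(\nabla f,\cdot)|\lesssim R^{3/2}$, and $0\le\det\mathrm{II}\lesssim R^{2}$; hence $K_{\Sigma_c}=\tfrac12 R\,(1+o(1))$ pointwise on $\Sigma_c$. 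Using the \emph{lower} bound $R\ge C_0^{-1}/\rho\gtrsim 1/|c|$ this yields $K_{\Sigma_c}\gtrsim 1/|c|$ pointwise, so Bonnet--Myers gives $\mathrm{diam}(\Sigma_c)\lesssim|c|^{1/2}$; using the \emph{upper} bound together with Gauss--Bonnet $\int_{\Sigma_c}K_{\Sigma_c}=4\pi$ gives $\mathrm{Area}(\Sigma_c)\gtrsim|c|$ (indeed $\mathrm{Area}(\Sigma_c)=8\pi|c|+o(|c|)$, by combining $K_{\Sigma_c}\approx\tfrac12 R$, the coarea formula, and the identity $\int_{\Sigma_c}|\nabla f|=\int_{\{f>c\}}R$ coming from $\Delta f=-R$). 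Since $|\nabla f|$ is bounded below near infinity, flowing along $\pm\nabla f$ for time $\lesssim r_m$ shows that $B(p_m,Ar_m)$ contains $\bigcup_{|c-c_m|\le\delta r_m}\Sigma_c$ for suitable constants $A,\delta$ (here one uses $\mathrm{diam}(\Sigma_c)\lesssim|c_m|^{1/2}\asymp r_m$ for $|c-c_m|\le\delta r_m$); this set has volume $\gtrsim r_m\cdot\mathrm{Area}(\Sigma_{c_m})\gtrsim\rho_m^{1/2}\cdot\rho_m\asymp r_m^{3}$, giving the desired bound.

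With Step~2 in place, $(M,g,f)$ is a $3$-dimensional $\kappa$-noncollapsed steady soliton, so it is rotationally symmetric by \cite{Br1}. Alternatively, the rescaled Ricci flows of Definition~\ref{brendle}(ii) now subconverge, using the uniform non-collapsing and the curvature bound from \eqref{curvature decay}, to a $\kappa$-noncollapsed ancient solution with bounded nonnegative curvature; because $|\mathrm{Ric}(\nabla f,\cdot)|\lesssim R^{3/2}$, the $\nabla f$-direction becomes parallel in the limit, so the limit splits a line and, by the classification of $2$-dimensional $\kappa$-solutions, is the round shrinking cylinder with metric \eqref{n-2-behavior} for $n=3$; thus $(M,g,f)$ is asymptotically cylindrical and \cite{Br2} applies. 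The main obstacle is Step~2, and within it the estimate $\mathrm{diam}(\Sigma_c)\lesssim|c|^{1/2}$: the area lower bound is essentially free from Gauss--Bonnet, but turning the pointwise relation $K_{\Sigma_c}\approx\tfrac12 R$ into a Bonnet--Myers diameter bound genuinely uses the lower curvature bound in \eqref{curvature decay} (to prevent $K_{\Sigma_c}$ from being small where $R$ might a priori decay faster than $1/\rho$), together with the soliton identities and Shi-type derivative estimates; a bootstrap is also needed, since the relation $\rho\asymp|c|$ on $\Sigma_c$ that enters these estimates already presupposes control of $\mathrm{diam}(\Sigma_c)$.
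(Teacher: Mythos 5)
Your proposal is correct, and its technical core coincides with the paper's: Step 1 is the paper's Lemma \ref{positive curvature} (rule out the splitting $\mathbb{R}\times(\text{cigar})$ because the cigar's exponential curvature decay violates the lower bound in \eqref{curvature decay}), and Step 2 reproduces the paper's Section 3 almost verbatim --- the soliton identity $\mathrm{Ric}(\nabla f,\cdot)=\tfrac12\nabla R$ plus Shi-type estimates to get $K_{\Sigma_c}=\tfrac12 R(1+o(1))$ via the Gauss equation, Myers for $\mathrm{diam}(\Sigma_c)\lesssim |c|^{1/2}$, Gauss--Bonnet for $\mathrm{Area}(\Sigma_c)\gtrsim|c|$, and the coarea formula for the volume lower bound (which is exactly Proposition \ref{volume estimate}; your worry about a bootstrap for $\rho\asymp|c|$ is unnecessary, since the Cao--Chen estimate $c_1\rho\le f\le c_2\rho$ is pointwise and independent of level-set diameters). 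Where you genuinely diverge is the endgame. You convert the volume bound into $\kappa$-noncollapsedness on all scales and invoke \cite{Br1} directly, which short-circuits the paper's entire Section 4; this is legitimate and more economical, and indeed the paper's own Proposition \ref{volume estimate} already yields the noncollapsing you need, so the authors could have done the same. The paper instead verifies condition (ii) of Definition \ref{brendle} explicitly: it extracts the blow-down limit splitting a line, proves the uniform two-sided curvature-ratio bound of Lemma \ref{lem-for the main theorem-2} on the cross-sectional flow, and uses the Daskalopoulos--Hamilton--Sesum classification \cite{DHS} of compact ancient surface flows (the ratio bound rules out the Rosenau solution) before citing \cite{Br2}. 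What the paper's route buys is independence from Perelman's structure theory of $\kappa$-solutions that underlies \cite{Br1} (and from the classification of two-dimensional $\kappa$-solutions you invoke in your alternative route), at the cost of the extra convergence analysis; what your primary route buys is brevity. Both are valid proofs of Theorem \ref{main-theorem}.
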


Under the condition  (\ref{curvature decay}),  we need to check the  property (ii) in Definition \ref{brendle} to prove Theorem \ref{main-theorem}.   Actually, we show that  for any sequence $p_{i}\rightarrow\infty$, there exists a subsequence $p_{i_{k}}\rightarrow\infty$ such that
\begin{align*}
(M,g_{p_{i_{k}}}(t),p_{i_{k}})\rightarrow(\mathbb{R}\times\mathbb{S}^{2},\widetilde{g}(t),p_{\infty}),~for~t\in(-\infty,1),
\end{align*}
where $g_{p_{i_{k}}}(t)=R(p_{i_{k}})g(R^{-1}(p_{i_{k}})t)$ and $(\mathbb{R}\times\mathbb{S}^{2},\widetilde{g}(t))$ is a shrinking cylinders flow, i.e.
$$\widetilde{g}(t)=dr^2+(2-2t)g_{\mathbb{S}^{2}}.$$

As in \cite{DZ3},  we  study the geometry of neighborhood $M_{p,k}=\{x\in M| ~ f(p)-\frac{k}{\sqrt{R(p)}}\le f(x)\le f(p)+\frac{k}{\sqrt{R(p)}}\}$  around  level set $\Sigma_r=\{f(x)=f(p)=r\}$ for any $p\in M$.  We are able to give a uniform injective radius estimate for $(M,R_{p_i}g)$ at each sequence of $p_i$. Then  we can still get a limit flow for  rescaled  flows $(M,g_{p_{i}}(t))$,   which  will  split off a  line.  By using a  classification result of Daskalopoulos-Hamilton-Sesum  for  ancient flows on a compact surface  \cite{DHS}, we  finish the proof of Theorem \ref{main-theorem}.

  We remark that the curvature condition  in Theorem \ref{main-theorem} cannot be removed,  since there does exist a 3-dimensional non-flat  steady Ricci soliton with exponential curvature decay.  For example, $(\mathbb{R}^2\times\mathbb{S}^{1},g_{cigar}+ds^2)$, where $(\mathbb{R}^2,g_{cigar})$ is a  cigar soliton.
Also, Theorem \ref{main-theorem} is not true for dimension $n\ge 4$ by Cao's examples of steady  K\"{a}hler-Ricci  solitons with positive  sectional curvature \cite{Ca2}.

At last, we remark that it is still  open   \textit{wether there exists a 3-dimensional collapsed steady Ricci soliton with positive  curvature}.  Hamilton has conjectured that \textit{there should exist a family of collapsed 3-dimensional complete gradient steady Ricci solitons with positive curvature and $S^1$-symmetry} (cf. \cite{C-He}). Our result shows that 
the curvature of  Hamilton's examples could not  have a linear decay.

\noindent {\bf Acknowledgements.} The work was done partially when the second author was   visiting at the Mathematical Sciences Research Institute at Berkeley during the spring 2016 semester. 
 The author  would like to thank her hospitality and the financial supports,    National Sciences Foundation under Grant No. DMS-1440140, and  Simons Foundation.

\section{Positivity of Ricci curvature}
$(M,g,f)$ is called a  gradient steady Ricci soliton if  a Riemannian metric $g$ on $M$ satisfies
\begin{align}
{\rm Ric}(g)=\nabla^2 f,
\end{align}
for some smooth function $f$.   We first show  the positivity of Ricci curvature of $(M,g,f)$
  under  (\ref{curvature decay})  of Theorem \ref{main-theorem}.

\begin{lem}\label{positive curvature}
Under   (\ref{curvature decay}),     $(M,g,f)$ has positive sectional curvature.
\end{lem}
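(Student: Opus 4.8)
The plan is to combine three standard facts about 3-dimensional complete gradient steady Ricci solitons. First, by Chen's theorem (quoted in the footnote), any 3-dimensional ancient solution—in particular the Ricci flow generated by a steady soliton—has nonnegative sectional curvature; hence $(M,g)$ has $\mathrm{Rm}\ge 0$, and since we are in dimension $3$ this forces $\mathrm{Ric}\ge 0$. Second, I would invoke the strong maximum principle for the evolution of the curvature operator (Hamilton's version): on a solution with $\mathrm{Rm}\ge 0$, the kernel of the curvature operator is invariant under parallel transport and, if it is nonempty at one point and one time, the manifold locally splits as a product. Third, I would rule out any such splitting using the curvature decay hypothesis \eqref{curvature decay}, which in particular gives $R(x)>0$ everywhere (since $R$ is real-analytic, positive near infinity by the lower bound $C_0^{-1}/\rho(x)$, and $\ge 0$ by Chen; if $R$ vanished somewhere it would vanish identically by the strong maximum principle applied to $\partial_t R=\Delta R+2|\mathrm{Ric}|^2$, contradicting the lower bound).

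More concretely, the steps in order: (1) Observe $(M,g)$ is non-flat and has $R>0$ everywhere, as just argued. (2) Suppose for contradiction that the sectional curvature is not strictly positive at some point. By the strong maximum principle, the universal cover splits isometrically as $N^2\times\mathbb{R}$ with $N^2$ a steady soliton surface (or $N$ has an $\mathbb{R}$-factor and $N$ itself has positive curvature). A 2-dimensional complete steady gradient Ricci soliton with $R>0$ is the cigar soliton $\Sigma$ (Hamilton's classification), whose scalar curvature decays \emph{exponentially}, i.e. like $e^{-c\,\rho}$. (3) On the product $\Sigma\times\mathbb{R}$ the scalar curvature is that of the cigar factor, so along a sequence of points going to infinity in the $\mathbb{R}$-direction with bounded $\Sigma$-coordinate, $R$ stays bounded below by a positive constant, while $\rho(x)\to\infty$; this violates the upper bound $R(x)\le C_0/\rho(x)$. (Alternatively, going to infinity along the cigar factor, $R$ decays exponentially in $\rho$, violating the lower bound $C_0^{-1}/\rho(x)\le R(x)$.) Either way we get a contradiction, so the sectional curvature is strictly positive.

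The main obstacle is handling the splitting case carefully: one must make sure the strong-maximum-principle dichotomy is applied on the universal cover and that the de Rham factors are themselves steady solitons (the soliton potential $f$ restricts/splits accordingly), so that Hamilton's 2-dimensional classification and the known exponential decay of the cigar can be brought to bear. Once the structure of the splitting is pinned down, the contradiction with \eqref{curvature decay} is immediate from the growth mismatch (polynomial versus exponential / versus a positive constant). No delicate estimates are needed beyond these structural inputs.
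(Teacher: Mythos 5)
Your overall strategy is the same as the paper's: reduce to positivity of the Ricci curvature, use a splitting/strong-maximum-principle argument to conclude that the universal cover is $(\mathbb{R}^2\times\mathbb{R}, g_{cigar}+ds^2)$, and then contradict the linear decay (\ref{curvature decay}) using the exponential curvature decay of the cigar. (The paper cites Shi's splitting theorem where you invoke Chen's nonnegativity plus Hamilton's strong maximum principle, but this is the same mechanism.) However, the first of your two proposed routes to the contradiction does not work as stated: sending points to infinity in the $\mathbb{R}$-direction of the \emph{universal cover} need not produce points going to infinity in $M$, because the deck group may act nontrivially on that factor --- indeed the paper's own example $(\mathbb{R}^2\times\mathbb{S}^1, g_{cigar}+ds^2)$ is exactly a quotient in which the line direction closes up --- and the hypothesis (\ref{curvature decay}) is an assumption on $M$, not on $\widetilde M$. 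Your alternative route, taking $\widetilde x_i=(p_i,0)$ going to infinity along the cigar factor, is precisely the paper's argument; to make it rigorous you need the small but essential observation that the covering projection does not increase distances, $\rho(x_i,x_0)\le\widetilde\rho(\widetilde x_i,\widetilde x_0)$, so that
$$R(x_i)\,\rho(x_i,x_0)\;\le\;\widetilde R(\widetilde x_i)\,\widetilde\rho(\widetilde x_i,\widetilde x_0)\;\longrightarrow\;0,$$
which violates the lower bound in (\ref{curvature decay}) once one checks that $\rho(x_i,x_0)\to\infty$ (this follows from $R(x_i)\to 0$ together with the decay hypothesis). With that distance comparison added and the first route discarded, your proof is complete and essentially identical to the paper's.
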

\begin{proof}

 We need to show that $(M,g)$ has positive Ricci curvature.
   On the contrary,  $(M,g)$ locally splits off a flat piece of line by Shi's splitting theorem \cite{S2}.
    Then, the universal cover $(\widetilde{M},\widetilde{g})$ of $(M,g)$ is isometric to a product Riemannian manifold of a real line and the cigar soliton. Namely,  $(\widetilde{M},\widetilde{g})=(\mathbb{R}^2\times\mathbb{R},g_{cigar}+ds^2)$. Let  $\pi:\widetilde{M}\to M$ be  a universal covering. We fix $x_0\in M$ and $\widetilde{x}_0\in \widetilde{M}$ such that $\pi(\widetilde{x}_0)=x_0$. For any $x\in M$ and $\widetilde{x}\in \widetilde{M}$ such that $\pi(\widetilde{x})=x$, one sees
 \begin{align}\label{positive curvature-1}
 \rho(x,x_0)\le \widetilde{\rho}(\widetilde{x},\widetilde{x}_0),
  \end{align}
  where $\rho$ and $\widetilde{\rho}$ are the distance functions w.r.t $g$ and $\widetilde{g}$ respectively. Let $\{\widetilde{x}_i\}_{i\ge1}$ be a sequence of points  so that $\widetilde{x}_i=(p_i,0)\in \mathbb{R}^2\times\mathbb{R}$ tend to infinity. Then, one may check that
 \begin{align}\label{positive curvature-2}
 \widetilde{R}(\widetilde{x}_i)\rho(\widetilde{x}_i,\widetilde{x}_0)\to0,~{\rm as}~i\to\infty.
 \end{align}
Since $R(x_i)=\widetilde{R}(\widetilde{x}_i)\to0$, where $x_i=\pi(\widetilde{x}_i)$,   we  see $d(x_i,x_0)\to \infty$ by (\ref{curvature decay}).
Again by (\ref{curvature decay}) and (\ref{positive curvature-1}), we get
\begin{align}
C_1\le R(x_i)d(x_i,x_0)\le \widetilde{R}(\widetilde{x}_i)d(\widetilde{x}_i,\widetilde{x}_0).
\end{align}
This is a contradiction to (\ref{positive curvature-2}). Hence,  the lemma is  proved.
\end{proof}

\begin{cor}\label{equilibrium-existence} $(M,g,f)$  in Theorem \ref{main-theorem} has a unique equilibrium point $o$, i.e., $ \nabla f(o)=0$. As a consequence,  $\Sigma_r=\{f(x)=r\}$ is diffeomorphic to $\mathbb{S}^{2}$, for any $r>f(o)$.
\end{cor}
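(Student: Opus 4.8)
The plan is to locate the equilibrium point as a maximum point of the scalar curvature, to deduce its uniqueness from the convexity of $f$, and then to run Morse theory on $f$. By Lemma \ref{positive curvature}, $(M,g)$ has positive sectional curvature, hence $\mathrm{Ric}>0$ everywhere; in particular $R>0$, and the soliton equation $\mathrm{Ric}=\nabla^2 f$ gives $\nabla^2 f>0$, so $f$ is strictly convex along every geodesic. I will use the standard steady-soliton identities $R+|\nabla f|^2\equiv c$ for a positive constant $c$ (it equals $R+|\nabla f|^2\ge R>0$ at any point) together with $\nabla R=-2\,\mathrm{Ric}(\nabla f,\cdot)$. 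Since $0<R(x)\le C_0/\rho(x)\to 0$ as $\rho(x)\to\infty$, the finite number $\sup_M R$ is attained at some $o\in M$. There $0=\nabla R(o)=-2\,\mathrm{Ric}(\nabla f)(o)$, and since $\mathrm{Ric}(o)$ is positive definite this forces $\nabla f(o)=0$, so an equilibrium point exists.

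For uniqueness, suppose $o'$ is another equilibrium point and let $\gamma\colon[0,L]\to M$ be a minimizing geodesic from $o$ to $o'$. Then $h:=f\circ\gamma$ has $h''=\nabla^2 f(\dot\gamma,\dot\gamma)>0$, hence is strictly convex, while $h'(0)=\langle\nabla f(o),\dot\gamma(0)\rangle=0=\langle\nabla f(o'),\dot\gamma(L)\rangle=h'(L)$, which no nonconstant strictly convex function can satisfy; hence $o$ is unique. The same computation with $o'$ replaced by an arbitrary $x\ne o$ shows $h$ is strictly increasing, so $f(x)>f(o)$; thus $o$ is also the strict global minimum of $f$ and $f\ge f(o)$ on $M$.

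It remains to identify $\Sigma_r$ for $r>f(o)$, and the crux is that $f$ is a proper exhaustion. As $|\nabla f|^2=c-R\le c$ is bounded, $\nabla f$ is a complete vector field, and its flow $\phi_t$ has $o$ as a hyperbolic source, the linearization of $\nabla f$ at $o$ being $\mathrm{Ric}(o)>0$. Along any backward trajectory $t\mapsto\phi_{-t}(x)$ the quantity $|\nabla f|^2=c-R$ is nonincreasing, hence stays $\le c-R(x)<c$; if such a trajectory left every compact set then $R\to 0$ along it, forcing $|\nabla f|^2\to c$, a contradiction. So every backward trajectory remains in a compact set and converges to the critical set $\{o\}$, and consequently the trajectories meeting a small geodesic sphere $S_\delta(o)\cong\mathbb{S}^2$ cover $M\setminus\{o\}$. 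A forward trajectory cannot stay in a compact set (its $\omega$-limit set would lie in $\{o\}$, forcing $f\to f(o)$ while $f$ increases along it), so it leaves every compact set, whence $R\to 0$, $|\nabla f|^2\to c>0$, and $f\to+\infty$ along it. Writing an arbitrary sequence leaving every compact set as $x_i=\phi_{t_i}(y_i)$ with $y_i\in S_\delta(o)$ (necessarily $t_i\to\infty$) and comparing with the forward trajectory through a limit point of $(y_i)$ then gives $f(x_i)\to\infty$, so $f$ is proper. Thus $f$ is a proper, bounded-below Morse function whose only critical point $o$ has index $0$, so by Morse theory $f^{-1}((-\infty,r])$ is diffeomorphic to the closed $3$-ball for every $r>f(o)$, and therefore $\Sigma_r=f^{-1}(r)$ is a smooth compact surface diffeomorphic to $\mathbb{S}^2$ (and $M\cong\mathbb{R}^3$, in accordance with Gromoll--Meyer). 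I expect the properness of $f$ to be the only genuinely delicate step; the existence of $o$ via $\max_M R$ and its uniqueness via geodesic convexity of $f$ are comparatively soft.
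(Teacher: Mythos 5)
Your proof is correct and follows essentially the same route as the paper: locate $o$ as the maximum point of $R$ (which exists by the decay hypothesis), use $\nabla R(o)=0$ together with the soliton identity and the positivity of Ricci to get $\nabla f(o)=0$, deduce uniqueness from the strict convexity $\nabla^2 f=\mathrm{Ric}>0$, and conclude by Morse theory. The only difference is that you supply a self-contained properness argument for $f$, whereas the paper cites this step from \cite{DZ3}, Lemma 2.1.
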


\begin{proof}

 Note that
\begin{align}\label{identity}
|\nabla f|^2+R=A.
\end{align}
By taking covariant derivatives on both sides of  (\ref{identity}), it follows
\begin{align}
2{\rm Ric}(\nabla f,\nabla f)=-\langle\nabla R,\nabla f\rangle.
\end{align}
On the the hand,  by (\ref{curvature decay}),  there exists a point $o$ such that
$$\sup_M R(x)=R(o)=R_{{\rm max}}.$$
In particular, $\nabla R(o)=0.$
Thus
$${\rm Ric}(\nabla f,\nabla f)=0.$$
By Lemma \ref{positive curvature},  $\nabla f(o)=0$.  The uniqueness also follows from the positivity of Ricci curvature.

 By the Morse theorem,  $\Sigma_r=\{f(x)=r>f(o)\}$ is diffeomorphic to $\mathbb{S}^{2}$ (cf. \cite{DZ3}, Lemma 2.1).

\end{proof}

\section{Geometry of $M_{p,k}$}

For any $p\in M$ and number $k>0$, we set
$$M_{p,k}=\{x\in M| ~ f(p)-\frac{k}{\sqrt{R(p)}}\le f(x)\le f(p)+\frac{k}{\sqrt{R(p)}}\}.$$
Let $g_{p}=R(p)g$ be a rescaled metric and denote $B(p,r;  g_{p})$ a $r$-geodesic ball centered at $p$ with respect to $g_{p}$.
Then by Corollary \ref{equilibrium-existence}, we have (cf. \cite{DZ3}, Lemma 3.3)

\begin{lem}\label{set-mr-contain-1}
Under   (\ref{curvature decay}),  for   any $p\in M$ and number $k>0$ with $f(p)-\frac{k}{\sqrt{R(p)}}>f(o)$,   it holds
\begin{align}\label{set-mr-contain}
B(p,\frac{k}{\sqrt{R_{max}}};  g_{p})\subset M_{p,k}.
\end{align}

\end{lem}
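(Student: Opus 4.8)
The plan is to show that any point $x \in B(p, k/\sqrt{R_{\max}}; g_p)$ satisfies $|f(x) - f(p)| \le k/\sqrt{R(p)}$, which is exactly the defining condition of $M_{p,k}$. The natural tool is the gradient flow of $f$ together with the soliton identity \eqref{identity}. First I would recall that by Corollary \ref{equilibrium-existence} the level sets $\Sigma_r$ for $r > f(o)$ are compact spheres, so the region $\{f > f(o)\}$ is foliated by them and $\nabla f$ never vanishes there. Since the hypothesis $f(p) - k/\sqrt{R(p)} > f(o)$ ensures the whole slab $\{|f - f(p)| \le k/\sqrt{R(p)}\}$ sits in this region, $f$ has no critical points on it.

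Next, given $x \in B(p, k/\sqrt{R_{\max}}; g_p)$, pick a $g_p$-minimizing geodesic $\gamma$ from $p$ to $x$ of length $\ell < k/\sqrt{R_{\max}}$ (parametrized by $g_p$-arclength). Along $\gamma$ one estimates the change in $f$:
\begin{align*}
|f(x) - f(p)| = \left| \int_0^{\ell} \langle \nabla^{g_p} f, \dot\gamma \rangle_{g_p}\, ds \right| \le \int_0^{\ell} |\nabla^{g_p} f|_{g_p}\, ds.
\end{align*}
Under the rescaling $g_p = R(p) g$ one has $|\nabla^{g_p} f|_{g_p} = R(p)^{-1/2} |\nabla f|_g$, and by \eqref{identity} together with the curvature bound \eqref{curvature decay} (in particular $R \le R_{\max}$, so $A = \sup R = R_{\max}$ since $R \to 0$ at infinity), we get $|\nabla f|_g^2 = A - R \le R_{\max}$, hence $|\nabla^{g_p} f|_{g_p} \le R(p)^{-1/2} \sqrt{R_{\max}}$. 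Therefore
\begin{align*}
|f(x) - f(p)| \le \ell \cdot \frac{\sqrt{R_{\max}}}{\sqrt{R(p)}} < \frac{k}{\sqrt{R_{\max}}} \cdot \frac{\sqrt{R_{\max}}}{\sqrt{R(p)}} = \frac{k}{\sqrt{R(p)}},
\end{align*}
which places $x$ in $M_{p,k}$ and proves the inclusion \eqref{set-mr-contain}.

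The only subtlety — and the step I would be most careful about — is the identification $A = R_{\max}$, i.e. that the constant $A$ in \eqref{identity} equals the supremum of the scalar curvature. This follows because $|\nabla f|^2 \ge 0$ forces $A \ge \sup_M R = R_{\max}$, while evaluating \eqref{identity} at the equilibrium point $o$ (where $\nabla f(o) = 0$ by Corollary \ref{equilibrium-existence}) gives $A = R(o) = R_{\max}$. Once this is in hand the pointwise bound $|\nabla f|_g \le \sqrt{R_{\max}}$ on all of $M$ is immediate and the geodesic estimate above goes through without further work; the hypothesis $f(p) - k/\sqrt{R(p)} > f(o)$ is not strictly needed for the inclusion itself but guarantees $M_{p,k}$ is the clean spherical shell used later in the paper.
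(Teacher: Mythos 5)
Your proof is correct and is essentially the standard argument (the paper itself gives no proof here, deferring to \cite{DZ3}, Lemma 3.3, which runs the same way): integrate $|\nabla f|\le\sqrt{R_{\max}}$, obtained from the identity $|\nabla f|^2+R=R_{\max}$ evaluated at the equilibrium point together with $R\ge 0$, along a $g_p$-minimizing geodesic. Your side remarks are also accurate — the inclusion itself only needs the global gradient bound, while the hypothesis $f(p)-k/\sqrt{R(p)}>f(o)$ serves to keep $M_{p,k}$ away from the critical point for the later level-set arguments.
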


By Lemma \ref{set-mr-contain-1}, we prove

\begin{lem}\label{lem-pointwise curvature estimate}
Under   (\ref{curvature decay}), there exists a constant $C$ such that
\begin{align}\label{pointwise curvature estimate}
\frac{|\Delta R|(p)}{R^2(p)}\le C,~\forall~p\in ~M.
\end{align}
\end{lem}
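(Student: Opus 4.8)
The plan is to derive~(\ref{pointwise curvature estimate}) by rescaling the soliton at $p$ and applying Shi's local derivative estimates to the associated self-similar Ricci flow. Since $R>0$ everywhere (Lemma~\ref{positive curvature}) and $|\Delta R|$ is smooth, the ratio $|\Delta R|/R^2$ is bounded on any compact set, so I may assume $\rho(p)$ is large. The first ingredient is a curvature bound on a \emph{large} ball: if $x\in B(p,\tfrac12\rho(p);g)$ then $\rho(x)\ge\tfrac12\rho(p)$, so the upper bound in~(\ref{curvature decay}) gives $R(x)\le 2C_0/\rho(p)$, and then the lower bound gives $R(x)\le 2C_0^2 R(p)$ on that ball. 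As $\dim M=3$, ${\rm Rm}$ is an algebraic expression in ${\rm Ric}$, and positivity of the sectional curvature (Lemma~\ref{positive curvature}) forces $|{\rm Ric}|\le R$; hence $|{\rm Rm}|_g\le c_0 R(p)$ on $B(p,\tfrac12\rho(p);g)$ with $c_0$ universal.

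Rescaling by $g_p=R(p)g$, this reads $|{\rm Rm}|_{g_p}\le c_0$ on the $g_p$-ball of radius $L_p:=\tfrac12\sqrt{R(p)}\,\rho(p)$ about $p$, and crucially — again using the lower bound in~(\ref{curvature decay}) — $L_p\ge\tfrac12\sqrt{\rho(p)/C_0}\to\infty$. I then look at the self-similar flow $g_p(t)=R(p)\,\phi^{*}_{t/R(p)}g$, $g_p(0)=g_p$, where $\phi_s$ is the flow of $X=-\nabla f$; since $|\nabla f|_g^2=A-R\le A$ by~(\ref{identity}), $\phi_s$ displaces points by $g$-distance at most $\sqrt A\,|s|$. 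A short computation, using $R(p)\rho(p)\ge C_0^{-1}$ one more time, then shows that there is a \emph{fixed} $\tau_0>0$ (of size $\sim 1/C_0$) such that $|{\rm Rm}|_{g_p(t)}\le c_0$ on the whole parabolic neighbourhood $B(p,\tfrac12 L_p;g_p)\times[-\tau_0,0]$.

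Now apply Shi's local derivative estimate to the Ricci flow $g_p(t)$ on this parabolic neighbourhood: because the spatial radius $\tfrac12 L_p\to\infty$ while the time-length $\tau_0$ and the curvature bound $c_0$ are fixed, one gets $|\nabla^2{\rm Rm}|_{g_p}(p)\le C_1$ with $C_1$ depending only on $C_0$ and the dimension. Since $\Delta R$ is a contraction of $\nabla^2{\rm Rm}$, and since $\Delta_{g_p}R_{g_p}=R(p)^{-2}\Delta_g R$ by scaling, this gives exactly $|\Delta R|(p)/R^2(p)\le C_1$, which together with the compact case proves~(\ref{pointwise curvature estimate}).

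The one delicate point is the second step. Shi's estimate must run over a time interval of fixed length, and this is possible only if the rescaled curvature bound holds on a ball of radius $\to\infty$ — not merely on a ball of fixed $g_p$-radius such as the one in Lemma~\ref{set-mr-contain-1} for a fixed $k$, which would let the flow be controlled only for time $\sim\sqrt{R(p)}$ and yield the useless bound $|\Delta R|/R^2\lesssim R(p)^{-1/2}$. It is precisely the linear \emph{lower} bound on $R$ — the statement that the curvature scale does not collapse faster than linearly — that makes the controlled region large after rescaling and $\tau_0$ a genuine constant. (Equivalently, $|\nabla f|_{g_p}=\sqrt{R(p)}\,|\nabla f|_g\to 0$, so after rescaling the soliton is locally almost Einstein, and the bound also follows from interior elliptic estimates.)
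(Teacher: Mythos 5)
Your argument is correct and is essentially the paper's proof: both bound the curvature on a neighbourhood of $p$ using the two--sided linear decay, pass to the rescaled self--similar flow $g_p(t)=R(p)\,\phi^*_{R(p)^{-1}t}g$, obtain a curvature bound on a parabolic neighbourhood of fixed temporal length, apply Shi's local derivative estimates at the final time, and unscale via $\Delta_{g_p}R_{g_p}=R(p)^{-2}\Delta_g R$. The only real difference is the spatial localization: the paper works on the level--set slab $M_{p,1}$ and the fixed ball $B(p,1/\sqrt{R_{\max}};g_p)\subset M_{p,1}$ from Lemma \ref{set-mr-contain-1}, using the Cao--Chen comparison $c_1\rho\le f\le c_2\rho$ to get $R\le C'R(p)$ there, whereas you use the metric ball $B(p,\tfrac12\rho(p);g)$ and the triangle inequality directly. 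Both localizations are fine, and your version has the mild advantage of not invoking the potential function at all in this step.

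One remark: the parenthetical criticism at the end — that a fixed $g_p$-radius ball would control the flow only for time $\sim\sqrt{R(p)}$ and yield a useless bound — is not correct, and in fact the paper's proof uses exactly the fixed-ball localization. For the self--similar flow one has $R_{g_p(t)}(x)=R(p)^{-1}R(\phi_{R(p)^{-1}t}(x))$ and $\tfrac{d}{ds}R(\phi_s(x))=2\,{\rm Ric}(\nabla f,\nabla f)\ge 0$, so at any fixed point the scalar curvature only decreases as $t\to-\infty$; moreover positivity of the Ricci curvature gives $B(p,r;g_p(-t))\subseteq B(p,r;g_p(0))$, so the spatial region is preserved backward in time as well. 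Hence the bound $R_{g_p(t)}\le C'$ holds on $B(p,1/\sqrt{R_{\max}};g_p)\times[-1,0]$ (indeed for all $t\le 0$), and Shi applies there with a fixed time length. The large--ball rescaling you set up is therefore not needed, although it certainly also works.
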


\begin{proof}
Fix any $p\in M$ with $f(p)\ge r_0>>1$. Then
\begin{align}\label{inequality-set}
|f(x)-f(p)|\le \frac{1}{\sqrt{R(p)}},~ \forall ~x\in M_{p,1}.
\end{align}
It is known by  \cite{CaCh},
\begin{align}\label{inequality-cao chen}
c_1\rho(x)\le f(x)\le c_2 \rho(x), ~\forall~\rho(x)\ge r_0.
\end{align}
Thus by (\ref{curvature decay}), (\ref{inequality-set}) and (\ref{inequality-cao chen}), we get
\begin{align}
c_2\rho(x)\ge f(p)-\frac{1}{\sqrt{R(p)}}\ge c_1\rho(p)-\sqrt{C_0\rho(p)}.\notag
\end{align}
It follows
\begin{align}\label{upper bound}
\frac{R(x)}{R(p)}\le  C_0^2\frac{\rho(p)}{\rho(x)}\le \frac{2c_2C_0^2}{c_1},~\forall x\in M_{p,1}.
\end{align}
On the other hand, by (\ref{set-mr-contain}), we have
\begin{align}
B(p,\frac{1}{\sqrt{R_{\max}}};g_p)\subseteq M_{p,1}.\notag
\end{align}
Hence
\begin{align}\label{curvature bound-1}
R(x)\le C^{\prime}R(p),~\forall ~x\in B(p,\frac{1}{\sqrt{R_{\max}}};g_p).
\end{align}

Let $\phi_t$ be generated by $-\nabla f$. Then $g(t)=\phi_t^{\ast}g$ satisfies the Ricci flow,
\begin{align}\label{Ricci flow equation}
         \frac{\partial g(t)}{\partial t} &= -2{\rm Ric}(g(t)).
                  \end{align}
Also rescaled flow  $g_p(t)=R(p) g(R^{-1}(p) t)$  satisfies (\ref{Ricci flow equation}). Since the Ricci curvature is positive,
\begin{align}
B(p,\frac{1}{\sqrt{R_{\max}}};g_p(-t))\subseteq B(p,\frac{1}{\sqrt{R_{\max}}};g_p(0)),~t\in~[-1,0].\notag
\end{align}
Combining with  (\ref{curvature bound-1}), we get
\begin{align}
R_{g_p(t)}(x)\le C^{\prime},~\forall~x\in B(p,\frac{1}{\sqrt{R_{\max}}};g_p(0)),~t\in [-1,0].
\end{align}
Thus,  by Shi's higher order estimates, we obtain
\begin{align}
|\Delta_{g_p(t)} R_{g_p(t)}|(x)\le C_{1}^{\prime},~\forall~x\in B(p,\frac{1}{ 2\sqrt{R_{\max}}};g_p(-1)),~t\in  [-\frac{1}{2},0].\notag
\end{align}
It follows
\begin{align}
|\Delta R|(x)\le C_{1}^{\prime}R^2(p),~\forall~x\in B(p,\frac{1}{2\sqrt{R_{\max}}};g_p(-1)).\notag
\end{align}
In particular, we have
\begin{align}
|\Delta R|(p)\le C_{1}^{\prime}R^2(p),~{\rm as}~\rho(p)\ge r_0.\notag
\end{align}
The lemma is  proved.
\end{proof}

\begin{rem}\label{rem-pointwise curvature estimate}
Under (\ref{curvature decay}),   by the same argument as in the proof of  Lemma \ref{lem-pointwise curvature estimate}, for each $k\in\mathbb{N}$, there exists a constant $C(k)$ such that
\begin{align}
\frac{|\nabla^k R|(p)}{R^{\frac{k+2}{2}}(p)}\le C(k),~\forall~p\in ~M.
\end{align}
\end{rem}

Next, we want to show that  $M_{p,k}$ is bounded by  a finite  ball $B(p, Ck ; g_p)$, where $C$ is a uniform constant. We need
to use the Gauss formula,
\begin{align*}
R(X,Y,Z,W)=\overline{R}(X,Y,Z,W)+\langle B(X,Z),B(Y,W)\rangle-\langle B(X,W),B(Y,Z)\rangle,
\end{align*}
where $X,Y,Z,W\in T\Sigma_{r}$ and $B(X,Y)=(\nabla_{X}Y)^{\bot}$. Note that
\begin{align*}
B(X,Y)=&\langle \nabla_{X}Y,\nabla f\rangle\cdot\frac{\nabla f}{|\nabla f|^{2}}\\
=&[\nabla_{X}\langle Y,\nabla f\rangle-\langle Y,\nabla_{X}\nabla f\rangle]\cdot\frac{\nabla f}{|\nabla f|^{2}}\\
=&-{\rm Ric}(X,Y)\cdot\frac{\nabla f}{|\nabla f|^{2}}.
\end{align*}
 We choose a normal basis $\{e_1,e_2\}$  on $(\Sigma_r,\bar g)$  with the induced metric $\bar g$. Then  $\{e_1,e_2,\frac{\nabla f}{|\nabla f|}\}$ spans a normal basis of $(M,g)$. Thus
\begin{align}
R_{11}=&\overline{R}_{11}+R(\frac{\nabla f}{|\nabla f|},e_{1},e_{1},\frac{\nabla f}{|\nabla f|})-\frac{R_{11}R_{22}-R_{12}R_{21}}{|\nabla f|^{2}},\notag\\
R_{22}=&\overline{R}_{22}+R(\frac{\nabla f}{|\nabla f|},e_{2},e_{2},\frac{\nabla f}{|\nabla f|})-\frac{R_{11}R_{22}-R_{12}R_{21}}{|\nabla f|^{2}}.\notag
\end{align}
Since $(\Sigma_r, \bar g)$ is a surface,  $K=\overline{R}_{11}=\overline{R}_{22}$. Hence, we get

\begin{lem}\label{gauss-curvature}
The Gauss curvature of $(\Sigma_r,  \bar g)$ is given by
\begin{align}
K=\frac{R}{2}-\frac{{\rm Ric}(\nabla f,\nabla f)}{|\nabla f|^2}+\frac{R_{11}R_{22}-R_{12}R_{21}}{|\nabla f|^{2}}.
\end{align}
\end{lem}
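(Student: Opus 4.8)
The formula is, in effect, already contained in the computation displayed just before the statement; my plan is simply to assemble those pieces in the right order. First I would fix a point of $\Sigma_r$ (with $r>f(o)$, so that $\nabla f\neq 0$ there by Corollary \ref{equilibrium-existence} and Lemma \ref{positive curvature}) and work in the adapted orthonormal frame $\{e_1,e_2,\nabla f/|\nabla f|\}$ introduced above, where $\{e_1,e_2\}$ is orthonormal for the induced metric $\bar g$ on $\Sigma_r$. Using the soliton equation ${\rm Ric}=\nabla^2 f$, the second fundamental form of $\Sigma_r$ is $B(e_i,e_j)=-R_{ij}\,\nabla f/|\nabla f|^2$, exactly as derived in the excerpt, so the quadratic term in the Gauss equation is $\langle B(e_1,e_1),B(e_2,e_2)\rangle-\langle B(e_1,e_2),B(e_1,e_2)\rangle=(R_{11}R_{22}-R_{12}R_{21})/|\nabla f|^2$.

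Next I would invoke that $\Sigma_r$ is a surface, so its intrinsic Ricci tensor equals $K\bar g$ and hence $\bar R_{11}=\bar R_{22}=K$; this is precisely what makes the two displayed traced Gauss equations for $R_{11}$ and $R_{22}$ consistent. Adding those two equations gives $R_{11}+R_{22}=2K+R(\tfrac{\nabla f}{|\nabla f|},e_1,e_1,\tfrac{\nabla f}{|\nabla f|})+R(\tfrac{\nabla f}{|\nabla f|},e_2,e_2,\tfrac{\nabla f}{|\nabla f|})-2(R_{11}R_{22}-R_{12}R_{21})/|\nabla f|^2$. I would then use the two elementary trace identities in the frame $\{e_1,e_2,\nabla f/|\nabla f|\}$: that the scalar curvature satisfies $R=R_{11}+R_{22}+{\rm Ric}(\tfrac{\nabla f}{|\nabla f|},\tfrac{\nabla f}{|\nabla f|})$, and that $R(\tfrac{\nabla f}{|\nabla f|},e_1,e_1,\tfrac{\nabla f}{|\nabla f|})+R(\tfrac{\nabla f}{|\nabla f|},e_2,e_2,\tfrac{\nabla f}{|\nabla f|})={\rm Ric}(\tfrac{\nabla f}{|\nabla f|},\tfrac{\nabla f}{|\nabla f|})$. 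Substituting and solving for $K$ yields $K=\tfrac{R}{2}-{\rm Ric}(\tfrac{\nabla f}{|\nabla f|},\tfrac{\nabla f}{|\nabla f|})+(R_{11}R_{22}-R_{12}R_{21})/|\nabla f|^2$, and rewriting ${\rm Ric}(\tfrac{\nabla f}{|\nabla f|},\tfrac{\nabla f}{|\nabla f|})={\rm Ric}(\nabla f,\nabla f)/|\nabla f|^2$ gives exactly the claimed identity.

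There is no genuine obstacle in this argument; it is a direct consequence of the Gauss equation plus the soliton identity. The only points that need a little care are the sign and symmetry bookkeeping of the ambient curvature tensor in the Gauss equation, and making explicit that the step "$\bar R_{11}=\bar R_{22}=K$" uses $\dim\Sigma_r=2$ (which is where the $3$-dimensionality of $M$ enters). Everything else is routine trace manipulation in the adapted frame.
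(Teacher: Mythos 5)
Your argument is correct and is essentially identical to the paper's own derivation: the paper records the same second fundamental form computation $B(e_i,e_j)=-R_{ij}\nabla f/|\nabla f|^2$ and the two traced Gauss equations just before the lemma, then obtains $K$ by exactly the summation and trace manipulations you describe. Nothing further is needed.
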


\begin{lem}\label{set-mr-contain-2}
Under   (\ref{curvature decay}),   there exists  a uniform $B>0$ such that the following is true:
  for any $k\in\mathbb{N}$, there exists $\bar r_0=\bar r_0(k)$ such that
\begin{align}
M_{p,k}\subset B(p,2\pi\sqrt{B}+\frac{2k}{\sqrt{R_{\max}}} ; g_p), ~\forall ~ \rho(p)\ge\bar  r_0.
\end{align}
\end{lem}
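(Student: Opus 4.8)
The plan is to control the diameter of the level set $\Sigma_r$ in the rescaled metric $g_p$ via the Gauss--Bonnet theorem, and then to control the ``vertical'' extent of $M_{p,k}$ using the soliton identity. For the first ingredient, recall that $\Sigma_r$ is diffeomorphic to $\mathbb{S}^2$ by Corollary \ref{equilibrium-existence}, so $\int_{\Sigma_r} K\,dA_{\bar g}=4\pi$ by Gauss--Bonnet. Using Lemma \ref{gauss-curvature} together with the pointwise estimates of Lemma \ref{lem-pointwise curvature estimate} and Remark \ref{rem-pointwise curvature estimate}, I would show that $K\ge c\, R$ on $\Sigma_r$ for $\rho$ large: indeed $\operatorname{Ric}(\nabla f,\nabla f)/|\nabla f|^2$ and $(R_{11}R_{22}-R_{12}R_{21})/|\nabla f|^2$ are both $O(R^2)$ (since $|\nabla f|^2=A-R\to A>0$ and $|\operatorname{Ric}|=O(R)$), hence are negligible compared to $R/2$ once $\rho(p)$ is large. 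Therefore in the metric $g_p=R(p)g$, the Gauss curvature $K_{g_p}=R(p)^{-1}K$ satisfies $K_{g_p}\ge c' R/R(p)$, and by the uniform comparison $R/R(p)\ge c''$ on $M_{p,k}$ (an upper/lower bound of the same flavor as \eqref{upper bound}, valid because $\rho(x)/\rho(p)$ is bounded above and below on $M_{p,k}$ for fixed $k$ once $\rho(p)$ is large), we get a uniform positive lower bound $K_{g_p}\ge \sigma>0$ on $\Sigma_r$ for any $r$ with $|r-f(p)|$ small. By Bonnet--Myers applied on the surface $(\Sigma_r,\bar g_p)$, its intrinsic diameter is at most $\pi/\sqrt{\sigma}=:\pi\sqrt{B}$; a fortiori the extrinsic $g_p$-diameter of $\Sigma_{f(p)}$ is at most $\pi\sqrt B$, so any two points of $\Sigma_{f(p)}$ are joined by a $g_p$-path of length $\le \pi\sqrt B$, whence $\Sigma_{f(p)}\subset B(p,\pi\sqrt B;g_p)$.

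For the vertical direction, I would flow along $\nabla f/|\nabla f|^2$: for $x\in M_{p,k}$, there is a point $y\in\Sigma_{f(p)}$ lying on the same integral curve of $\nabla f$ through $x$, and the $g$-length of that curve segment is $\int |\nabla f|^{-1}\,df \le (A-R_{\max})^{-1/2}\,|f(x)-f(p)| \le C\,k/\sqrt{R(p)}$ since $R\le R_{\max}$ forces $|\nabla f|^2=A-R\ge A-R_{\max}>0$ (here $A>R_{\max}$ because $|\nabla f|$ cannot vanish away from $o$). Rescaling by $\sqrt{R(p)}$, this segment has $g_p$-length at most $Ck$; combined with the previous paragraph one obtains $d_{g_p}(x,p)\le \pi\sqrt B + Ck$. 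Adjusting the constant in front of $k$ to $2/\sqrt{R_{\max}}$ (using $R_{\max}\ge R(p)$ so $1/\sqrt{R(p)}\le \sqrt{R_{\max}}/R_{\max}\cdot\sqrt{R(p)}\cdot(\text{bounded})$, or simply enlarging $B$) and the constant in front of $\sqrt B$ to $2\pi$ gives the stated inclusion, with $\bar r_0(k)$ chosen large enough that all the ``for $\rho$ large'' estimates above hold on $M_{p,k}$, which forces $\rho(p)\ge \bar r_0(k)$ with $\bar r_0$ growing in $k$.

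The main obstacle is establishing the uniform lower bound $K_{g_p}\ge\sigma>0$ on $\Sigma_{f(p)}$, which requires two things done carefully: first, that $A-R_{\max}>0$ strictly (so $|\nabla f|$ is bounded away from $0$ on all of $M$ outside a compact set, making the error terms in Lemma \ref{gauss-curvature} genuinely $O(R^2)$ and hence lower order); and second, that the curvature ratio $R(x)/R(p)$ is bounded below by a \emph{uniform} positive constant for all $x\in\Sigma_{f(p)}$ and all $p$ with $\rho(p)\ge\bar r_0$ — this is where $k$ enters, since $\Sigma_{f(p)}$ itself may not lie entirely within bounded distance $\rho(x)\asymp\rho(p)$ a priori, but the inclusion \eqref{set-mr-contain} of Lemma \ref{set-mr-contain-1} together with \eqref{inequality-cao chen} pins $\rho(x)$ to $\rho(p)$ up to a bounded factor on $M_{p,k}$, closing the loop. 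Once the uniform $\sigma$ is in hand, Bonnet--Myers and the integral-curve length estimate are routine.
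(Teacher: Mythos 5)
Your proposal follows essentially the same route as the paper: bound the error terms in Lemma \ref{gauss-curvature} by $O(R^2)$ using Lemma \ref{lem-pointwise curvature estimate}, deduce $K\asymp R\gtrsim R(p)$ on $\Sigma_{f(p)}$ via the two-sided comparison of $R(x)$ with $R(p)$ on $M_{p,k}$, apply Myers to bound ${\rm diam}(\Sigma_{f(p)},\bar g)$ by $C/\sqrt{R(p)}$, and then control the ``vertical'' displacement by integrating along the flow of $\nabla f$ using $\int|\nabla f|^{-1}\,df$. The splicing of the two estimates into $d_{g_p}(x,p)\le \pi\sqrt{B}+Ck$ is exactly the paper's computation.

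There is, however, one concrete error in your justification of the gradient lower bound. You assert $A>R_{\max}$ strictly, ``because $|\nabla f|$ cannot vanish away from $o$,'' and then use $(A-R_{\max})^{-1/2}$ as a finite constant in the vertical length estimate. In fact $A=R_{\max}$: by Corollary \ref{equilibrium-existence} the maximum of $R$ is attained at the equilibrium point $o$ where $\nabla f(o)=0$, so evaluating \eqref{identity} at $o$ gives $A=R(o)=R_{\max}$, and your constant $(A-R_{\max})^{-1/2}$ is therefore infinite. The correct source of the lower bound is not the sign of $A-R_{\max}$ but the curvature decay itself: $|\nabla f|^2=R_{\max}-R$, and \eqref{curvature decay} together with \eqref{inequality-cao chen} forces $R(x)\le R_{\max}/2$ on $M_{p,k}$ once $\rho(p)\ge r_0(k)$ is large, whence $|\nabla f|^2\ge R_{\max}/2$ there (this is \eqref{bound of gradient f} in the paper). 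With that substitution your integral becomes $\int|\nabla f|^{-1}\,df\le \sqrt{2/R_{\max}}\,|f(x)-f(p)|\le 2k/\sqrt{R_{\max}R(p)}$ and the rest of your argument goes through unchanged; note this same bound $|\nabla f|\ge\sqrt{R_{\max}}/\sqrt{2}$ is also what makes the error terms ${\rm Ric}(\nabla f,\nabla f)/|\nabla f|^2$ and $(R_{11}R_{22}-R_{12}R_{21})/|\nabla f|^2$ genuinely $O(R^2/R_{\max})$, so both of the ``main obstacles'' you flag are resolved by the same observation. The invocation of Gauss--Bonnet in your first paragraph is never actually used and can be dropped.
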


\begin{proof}
By (\ref{curvature decay}) and   (\ref{inequality-cao chen}),  we have
\begin{align}\label{bound of gradient f}
\frac{R_{\max}}{2}\le |\nabla f|^2(x)\le R_{\max},~\forall~x\in M_{p,k},
\end{align}
as long as $\rho(p)\ge r_0>>1.$
Then by Lemma \ref{gauss-curvature} and  Lemma \ref{lem-pointwise curvature estimate},  we get
\begin{align}
|K-\frac{R}{2}|=&|-\frac{{\rm Ric}(\nabla f,\nabla f)}{|\nabla f|^2}+\frac{R_{11}R_{22}-R_{12}R_{21}}{|\nabla f|^{2}}|\notag\\
\le&\frac{|\langle\nabla R,\nabla f\rangle|}{2|\nabla f|^2}+\frac{R^2}{|\nabla f|^{2}}\notag\\
\le&\frac{|\Delta R+2|{\rm Ric}|^2|}{2|\nabla f|^2}+\frac{R^2}{|\nabla f|^{2}}\notag\\
\le&\frac{(C+4)R^2}{R_{\max}}.\notag
\end{align}
It follows
\begin{align}\label{gauss curvature bound by scalar}
\frac{R(x)}{4}\le K(x)\le\frac{3R(x)}{4},~\forall ~x\in M_{p,k},  ~\rho(p)\ge r_0.
\end{align}

On the other hand, by (\ref{curvature decay}),
  (\ref{inequality-set}) and (\ref{inequality-cao chen}),  we see
\begin{align}\label{distance estimate}
 c_2^{-1}\Big(c_1\rho(p)-k\sqrt{\rho(p)C_0}\Big)&\le \rho(x) \notag\\
 &\le c_1^{-1}(c_2\rho (p)+k\sqrt{\rho(p)C_0}),~\forall ~x\in M_{p,k},
\end{align}
as long as  $\rho(p)\ge r_0.$
Then  similar to (\ref{upper bound}),   there exists a $\bar r_0\ge r_0$ such that
\begin{align}\label{lower-bound}R(x)\ge \frac{c_1}{2c_2C_0^2} R(p), ~\forall ~x\in M_{p,k}.
\end{align}
Thus  by (\ref{gauss curvature bound by scalar}), we get
\begin{align*}
\overline{R}_{ij}\geq  B^{-1} R(p)\overline{g}_{ij}, ~\forall ~x\in \Sigma_{f(p)}, ~\rho(p)\ge \bar r_0,
\end{align*}
    where $B>0$ is a uniform constant.   By the Myer's theorem,  the diameter of $\Sigma_{f(p)}$ is bounded by
\begin{align}
{\rm diam}(\Sigma_{f(p)},g)\le {\rm diam}(\Sigma_{f(p)},\overline{g}_{f(p)})\leq 2\pi\sqrt{\frac{B}{ R(p)}}.\notag
\end{align}
As a consequence,
\begin{align}\label{mr-set}
\Sigma_{f(p)}\subset B(p,2\pi \sqrt{B};  R(p)g).
\end{align}

For any $q\in M_{p,k}$, there exists  $q^{\prime}\in \Sigma_{f(p)}$  such that $\phi_{s}(q)=q^{\prime}$ for some $s\in \mathbb{R}$.
Then by (\ref{mr-set}) and  (\ref{bound of gradient f}), we have

\begin{align*}
d(q,p)\leq& d(q^{\prime},p)  + d(q,q^{\prime})\\
\leq& {\rm diam}(\Sigma_{f(p)},g)+\mathcal{L}(\phi_{\tau}|_{[0,s]})\\
\leq& 2\pi\sqrt{\frac{B}{R(p)}}+|\int_{0}^{s}|\frac{d\phi_{\tau}(q)}{d\tau}|d\tau|\\
=& 2\pi\sqrt{\frac{B}{ R(p)}}+\int_{0}^{s}|\nabla f(\phi_{\tau}(q))|d\tau\\
\le& 2\pi\sqrt{\frac{B}{R(p)}}+\int_{0}^{s}|\nabla f(\phi_{\tau}(q))|^{2}\cdot \frac{2}{\sqrt{R_{\max}}}d\tau\\
=& 2\pi\sqrt{\frac{B}{ R(p)}}+|\int_{0}^{s}\frac{d(f(\phi_{\tau}(q)))}{d\tau}\cdot \frac{2}{\sqrt{R_{\max}}}d\tau|\\
\leq& 2\pi\sqrt{\frac{B}{R(p)}}+|f(q)-f(p)|\cdot \frac{2}{\sqrt{R_{\max}}}\\
\leq& \Big(2\pi\sqrt{B}+\frac{2k}{\sqrt{R_{\max}}}\Big)\cdot \frac{1}{\sqrt{R(p)}}.
\end{align*}
Thus
\begin{align*}
M_{p,k}\subset B(p,2\pi\sqrt{B}+\frac{2k}{\sqrt{R_{\max}}} ; R(p)g).
\end{align*}
 The lemma is proved.

\end{proof}

By Lemma \ref{set-mr-contain-2}, we  get  the following volume estimate of  $B(p,s;g_p)$.

\begin{prop}\label{volume estimate}
Under    (\ref{curvature decay}) of Theorem  \ref{main-theorem}, there exists $s_0$ and ${c}>0$ such that
\begin{align}\label{volume-s}
{\rm Vol}B(p,s;g_p)\ge  c s^3,~\forall~s\le s_0~{\rm and}~\rho(p)\ge r_0>>1.
\end{align}
Moreover, the injective radius of $(M,g_p)$ at $p$ has a uniform lower bound $\delta>0$, i.e.,
\begin{align}\label{injective}
{\rm inj}(p,g_p)\ge \delta,~\forall~\rho(p)\ge r_0.
\end{align}
\end{prop}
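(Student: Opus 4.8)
The plan is to deduce Proposition~\ref{volume estimate} from the containment in Lemma~\ref{set-mr-contain-2} together with the curvature bounds already established. First I would fix $p$ with $\rho(p)\ge \bar r_0$ and rescale so that $g_p=R(p)g$; by Remark~\ref{rem-pointwise curvature estimate} all the curvatures of $g_p$ on a fixed-size ball around $p$ are uniformly bounded (independent of $p$), since $|\nabla^k R|/R^{(k+2)/2}\le C(k)$. Now apply Lemma~\ref{set-mr-contain-2} with, say, $k=1$: we get $M_{p,1}\subset B(p, L; g_p)$ with $L=2\pi\sqrt B+2/\sqrt{R_{\max}}$ a universal constant. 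The point is to turn this into a volume lower bound.

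The key step is a relative-volume / injectivity-radius argument. One clean route: suppose, for contradiction, that (\ref{volume-s}) fails along a sequence $p_i\to\infty$, i.e.\ there exist $s_i\le s_0$ with $\mathrm{Vol}\,B(p_i,s_i;g_{p_i})/s_i^3\to 0$. Since the curvature of $g_{p_i}$ is uniformly bounded (with all derivatives) on $B(p_i,L;g_{p_i})$, after passing to a subsequence the pointed manifolds $(M, g_{p_i}, p_i)$ converge in the $C^\infty$ Cheeger--Gromov sense on the ball of radius $L$ to a smooth limit $(M_\infty, g_\infty, p_\infty)$ — \emph{provided} we already know a noncollapsing statement, which is exactly what we are trying to prove, so this naive approach is circular. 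Instead I would argue directly: the failure of (\ref{volume-s}) would force the volume of $M_{p_i,1}$ (which contains $B(p_i,1/\sqrt{R_{\max}};g_{p_i})$ by Lemma~\ref{set-mr-contain-1}, a ball of definite radius) to be small relative to its diameter cubed, contradicting the lower bound on the volume coming from the soliton structure. The cleanest mechanism is: the level set $\Sigma_{f(p)}$ has, by (\ref{gauss curvature bound by scalar}), Gauss curvature comparable to $R(p)$, hence by Gauss--Bonnet $\mathrm{Area}(\Sigma_{f(p)}, g_p)$ is bounded \emph{below} by a universal positive constant (since $\int_{\Sigma} K\,dA = 4\pi$ forces $\mathrm{Area}\gtrsim 1/\max K \gtrsim 1$ in the rescaled metric). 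Then $M_{p,1}$, being foliated by such level sets over an $f$-interval of definite $g_p$-length (by (\ref{bound of gradient f}), $|\nabla f|_{g_p}$ is comparable to $1$), has $g_p$-volume bounded below by a universal constant $v_0>0$. Combined with $M_{p,1}\subset B(p,L;g_p)$, this gives $\mathrm{Vol}\,B(p,L;g_p)\ge v_0$ for all $\rho(p)\ge \bar r_0$.

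From the uniform lower bound on $\mathrm{Vol}\,B(p,L;g_p)$ and the uniform upper curvature bound, the small-scale estimate (\ref{volume-s}) and the injectivity radius bound (\ref{injective}) follow from standard comparison geometry: with sectional curvature bounded above by $K_0$ and a lower volume bound at a fixed radius $L$, the Bishop--Gromov inequality propagates a lower volume bound down to all scales $s\le s_0$ of the form $\mathrm{Vol}\,B(p,s;g_p)\ge c\,s^3$; and then Cheeger's lemma (volume lower bound plus two-sided curvature bound $\Rightarrow$ injectivity radius lower bound) yields $\mathrm{inj}(p,g_p)\ge\delta$. Here the two-sided curvature bound is available because the soliton has nonnegative (indeed positive) sectional curvature by Lemma~\ref{positive curvature}, giving the lower bound for free, and Remark~\ref{rem-pointwise curvature estimate} gives the upper bound.

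The main obstacle is establishing the uniform \emph{lower} bound on $\mathrm{Vol}\,B(p,L;g_p)$ without circularity — one cannot invoke Cheeger--Gromov compactness first, since noncollapsing is the conclusion. The resolution I expect to work is the Gauss--Bonnet argument on the level sets sketched above: it is genuinely a lower bound coming from topology ($\Sigma_{f(p)}\cong\mathbb S^2$ by Corollary~\ref{equilibrium-existence}) and the pointwise comparison $K\asymp R$ in the rescaled metric, and it does not presuppose any noncollapsing. Once that is in hand, everything else is classical Riemannian comparison geometry.
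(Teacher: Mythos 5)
Your proposal is correct and follows essentially the same route as the paper: a Gauss--Bonnet lower bound on the area of the level sets $\Sigma_l\cong\mathbb S^2$ (using $K\asymp R(p)$ from (\ref{gauss curvature bound by scalar})), integrated via the co-area formula over $M_{p,1}$ and combined with the containments of Lemmas \ref{set-mr-contain-1} and \ref{set-mr-contain-2} to get a uniform lower bound on ${\rm Vol}\,B(p,L;g_p)$, then Bishop--Gromov (valid since ${\rm Ric}\ge 0$ by Lemma \ref{positive curvature}) to propagate to small scales and Cheeger--Gromov--Taylor for the injectivity radius. The only quibble is that the monotonicity of ${\rm Vol}\,B(p,s)/s^3$ comes from the Ricci lower bound, not the upper sectional bound, but you have that ingredient in hand.
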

\begin{proof}

By Lemma  \ref{set-mr-contain-2},  we have
\begin{align}
M_{p,1}\subset B(p,2\pi\sqrt{B}+\frac{2}{\sqrt{R_{\max}}} ; g_p).\notag
\end{align}
In the following,
we give an estimate of  ${\rm Vol}(\Sigma_l, \bar g)$ for  any $l$  with $f(p)-\frac{1}{\sqrt{R(p)}}\le l\le f(p)+\frac{1}{\sqrt{R(p)}}$.

   By (\ref{upper bound}) and (\ref{lower-bound}), we see
\begin{align}
{C}_1^{-1}\le\frac{R(x)}{R(p)}\le {C}_1, ~\forall~\rho(p)\ge r_0~{\rm and}~x\in M_{p,1}.\notag
\end{align}
By (\ref{gauss curvature bound by scalar}), it follows that the Gauss curvature $K_l$ of $(\Sigma_l, g_p|_{\Sigma_l})$ satisfies
\begin{align}
\frac{1}{4C_1}\le K_l\le \frac{3C_1}{4}.\notag
\end{align}
Thus
\begin{align}
{\rm Vol}(\Sigma_l, \bar g)=\frac{1}{R(p)}{\rm Vol}(\Sigma_l,g_p|_{\Sigma_l})\ge \frac{64\pi {C}_1}{R(p)}.\notag
\end{align}
By  the Co-Area formula, we get
\begin{align}
{\rm Vol}(M_{p,1},g)=&\int_{f(p)-\frac{1}{\sqrt{R(p)}}}^{f(p)+\frac{1}{\sqrt{R(p)}}} \frac{{\rm Vol}(\Sigma_l, \bar g )}{|\nabla f|} dl  \notag\\
\ge&  128\pi  {C}_1 R_{max}^{-\frac{1}{2}} R^{-\frac{3}{2}}(p). \notag
\end{align}
Hence
\begin{align}\label{volume-ball}
{\rm Vol}(B(p,2\pi\sqrt{B}+\frac{2}{\sqrt{R_{\max}}} ; g_p))\ge {\rm Vol}(M_{p,1},g_p)\ge 128\pi {C}_1 R_{max}^{-\frac{1}{2}}.
\end{align}

By the volume comparison theorem, we derive from (\ref{volume-ball}),
\begin{align}
\frac{{\rm Vol}(B(p,s; g_p))}{s^3}\ge& \frac{{\rm Vol}(B(p,2\pi\sqrt{B}+\frac{2}{\sqrt{R_{\max}}} ; g_p))}{(2\pi\sqrt{B}+\frac{2}{\sqrt{R_{\max}}})^3}\notag\\
\ge&\frac{128\pi {C}_1R_{max}^{-\frac{1}{2}}  }{(2\pi\sqrt{B}+\frac{2}{\sqrt{R_{\max}}})^3},\notag
\end{align}
for any $s\le2\pi\sqrt{B}+\frac{2}{\sqrt{R_{\max}}}$.  This proves (\ref{volume-s}).  By   (\ref{volume-s}),  we can apply  a  result of Cheeger-Gromov-Taylor  for Riemannian manifolds with bounded curvature to get the injective radius estimate (\ref{injective})  immediately  \cite{CGT}.

\end{proof}

\section{Proof of Theorem \ref{main-theorem}}

First  we prove the following  convergence of rescaled flows.

\begin{lem}\label{lem-for the main theorem-1}
Under    (\ref{curvature decay}), let $p_i\to\infty$.   Then  by taking a subsequence of $p_i$ if necessary,   we have
\begin{align*}
(M,g_{p_{i}}(t),p_{i})\rightarrow(\mathbb{R}\times N,\widetilde{g}(t); p_{\infty}),~for~t\in(-\infty,0],
\end{align*}
where $g_{p_i}(t)=R(p_{i})g(R^{-1}(p_{i})t)$, $\widetilde{g}(t)=ds\otimes ds+g_{ N}(t)$ and $( N, g_{N}(t))$ is an ancient solution of Ricci flow on  $N$.
\end{lem}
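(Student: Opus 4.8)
The plan is to extract a Cheeger–Gromov limit of the rescaled Ricci flows $(M,g_{p_i}(t),p_i)$ and then show the limit splits off a line. First I would verify the hypotheses of Hamilton's compactness theorem for the sequence of pointed flows. By Lemma \ref{positive curvature} the sectional curvature of $(M,g,f)$ is positive; in particular $R\le C$ globally is not needed, but locally we have the curvature bound \eqref{curvature bound-1} (and its parabolic consequence) that gives $R_{g_{p_i}(t)}\le C'$ on fixed-size balls $B(p_i,\tfrac1{\sqrt{R_{\max}}};g_{p_i}(0))$ for $t\in[-1,0]$. Running this argument on larger balls $B(p_i,A;g_{p_i}(0))$ — which is legitimate because $M_{p_i,k}$ is squeezed between a ball of radius $\sim k/\sqrt{R_{\max}}$ (Lemma \ref{set-mr-contain-1}) and a ball of radius $\sim 2\pi\sqrt B+2k/\sqrt{R_{\max}}$ (Lemma \ref{set-mr-contain-2}), and \eqref{upper bound}/\eqref{lower-bound} bound $R(x)/R(p_i)$ there — yields uniform curvature bounds on every fixed geodesic ball for all $t\in(-\infty,0]$ after passing to a subsequence and exhausting $A\to\infty$ by a diagonal argument. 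Combined with the uniform injectivity radius lower bound $\mathrm{inj}(p_i,g_{p_i})\ge\delta$ from Proposition \ref{volume estimate}, Hamilton's compactness theorem produces a subsequence converging in the Cheeger–Gromov sense to a complete ancient Ricci flow $(M_\infty,g_\infty(t),p_\infty)$, $t\in(-\infty,0]$, with $R_{g_\infty(0)}(p_\infty)=1$ and bounded nonnegative curvature.

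Next I would show $(M_\infty,g_\infty(t))$ splits isometrically as $\mathbb{R}\times(N,g_N(t))$. The natural mechanism is that the function $f$, suitably normalized and rescaled, converges to a splitting function on the limit. Concretely, set $f_i = \sqrt{R(p_i)}\,(f - f(p_i))$; by \eqref{inequality-set} and the soliton identity \eqref{identity} together with \eqref{bound of gradient f}, $f_i$ is locally uniformly bounded with $|\nabla f_i|^2_{g_{p_i}} = (|\nabla f|^2/R_{\max})\cdot(R_{\max}/R(p_i))\cdot\ldots$ bounded away from $0$ and $\infty$ on $M_{p_i,k}$. Since $(M,g_{p_i}(t),f_i)$ is again a steady soliton structure in rescaled form, $\nabla^2 f_i = \mathrm{Ric}(g_{p_i})$, and the right-hand side Ricci curvature, when rescaled, tends to zero in the directions transverse to the "neck" because $R(p_i)\to 0$ forces the ambient curvature near $p_i$ to be small compared to... wait — more carefully: the limit $(M_\infty,g_\infty)$ carries a limit function $f_\infty$ (after passing to a further subsequence, using Shi-type estimates on all derivatives of $f$, which follow from Remark \ref{rem-pointwise curvature estimate}) with $\nabla^2 f_\infty = \mathrm{Ric}(g_\infty)$ and $|\nabla f_\infty|$ bounded below. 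A steady-soliton potential with nowhere-vanishing gradient whose level sets have uniformly pinched intrinsic curvature (by \eqref{gauss curvature bound by scalar}) forces, via the standard argument of Cao–Chen / Brendle, that the integral curves of $\nabla f_\infty/|\nabla f_\infty|$ are geodesics along which the metric is a product; equivalently the Bochner formula applied to $|\nabla f_\infty|^2 + R_{g_\infty}$ (which is constant on the limit) together with positivity of Ricci shows $\mathrm{Ric}(\nabla f_\infty,\nabla f_\infty)\equiv 0$, hence $\nabla f_\infty$ lies in the kernel of $\mathrm{Ric}$, and by the de Rham / Hamilton strong maximum principle for the evolving metric the flow splits off the $\mathbb{R}$-factor generated by $\nabla f_\infty$.

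Then $(N,g_N(t))$ is by construction a complete ancient Ricci flow on a surface (recall $\Sigma_r\cong\mathbb{S}^2$ from Corollary \ref{equilibrium-existence}, so $N$ is $2$-dimensional and, being a limit of spheres with pinched curvature, compact), with nonnegative curvature; this is exactly the output claimed in the statement. The identification of $N$ with $\mathbb{S}^2$ and of $g_N(t)$ with the round shrinking sphere is deferred to the next step of the paper (via Daskalopoulos–Hamilton–Sesum), so here I would stop at "ancient solution on $N$."

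The main obstacle I anticipate is the passage from curvature control on the \emph{fixed} unit-scale balls $B(p_i,1/\sqrt{R_{\max}};g_{p_i})$ to control on \emph{arbitrarily large} balls $B(p_i,A;g_{p_i})$ uniformly in $i$. This is where the two-sided bound \eqref{curvature decay} is essential: it must be leveraged, through the Cao–Chen estimate \eqref{inequality-cao chen} and the sandwiching of $M_{p_i,k}$ between metric balls of comparable radii, to show that the rescaled scalar curvature stays bounded on $B(p_i,A;g_{p_i})$ with a bound depending only on $A$ (not on $i$), and then to propagate this backward in time using positivity of Ricci (so balls shrink under the flow) plus Shi's estimates. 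Getting the derivative bounds on $f_i$ needed to extract a limit potential, and verifying that the convergence is strong enough (in $C^\infty_{\mathrm{loc}}$ on space-time) for the splitting argument to apply, is the technically delicate part; everything else is a fairly standard compactness-plus-splitting routine.
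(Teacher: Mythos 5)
Your compactness step is essentially the paper's: the sandwich $B(p_i,\overline r;g_{p_i})\subset M_{p_i,\overline r\sqrt{R_{\max}}}\subset B(p_i,2\pi\sqrt B+\ldots;g_{p_i})$ together with (\ref{upper bound})--(\ref{lower-bound}) gives $R\le C_1R(p_i)$ on arbitrary fixed balls, the monotonicity of $R$ along the flow (equivalently $2{\rm Ric}(\nabla f,\nabla f)=-\langle\nabla R,\nabla f\rangle\ge 0$) extends this to all $t\le 0$, and Proposition \ref{volume estimate} plus Hamilton compactness gives the limit. That part is fine.

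The splitting step, however, contains a genuine error in exactly the place where the lemma has content. With your normalization $f_i=\sqrt{R(p_i)}\,(f-f(p_i))$ one has $\nabla^2 f_i=\sqrt{R(p_i)}\,{\rm Ric}(g)$, and since $|{\rm Ric}(g)|_{g_{p_i}}\le C R/R(p_i)\le C'$ on the relevant balls, this means $|\nabla^2_{g_{p_i}}f_i|_{g_{p_i}}\le C'\sqrt{R(p_i)}\to 0$. The whole point is that the rescaled potential is \emph{asymptotically affine}: its limit satisfies $\nabla^2 f_\infty=0$, so $\nabla f_\infty$ is a parallel field, nontrivial because $|\nabla_{g_{p_i}}f_i|_{g_{p_i}}=|\nabla f|_g\to\sqrt{R_{\max}}>0$, and de Rham splits the limit along it. (This is precisely the paper's vector field $X_{(i)}=R(p_i)^{-1/2}\nabla f$ with $|\nabla_{(g_{p_i})}X_{(i)}|\le C\sqrt{R(p_i)}\to 0$.) You instead assert that the limit satisfies the steady soliton equation $\nabla^2 f_\infty={\rm Ric}(g_\infty)$ — which is off by the factor $\sqrt{R(p_i)}$ — and then claim that ``positivity of Ricci shows ${\rm Ric}(\nabla f_\infty,\nabla f_\infty)\equiv 0$.'' That is a non sequitur: positive Ricci would give ${\rm Ric}(\nabla f_\infty,\nabla f_\infty)>0$ wherever $\nabla f_\infty\neq 0$, and on the limit the Ricci curvature is only nonnegative, so nothing of the sort follows. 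The bookkeeping of the scaling factor is the heart of the argument and your write-up gets it wrong (your own ``wait --- more carefully'' aside shows the confusion). A second, smaller omission: even granting a nontrivial parallel field, one must check that its integral curve through $p_\infty$ is a complete line in order to get the global product $\mathbb{R}\times N$ rather than only a local splitting; the paper does this by the length estimate $L_{\infty,k}\ge R_{\max}^{-1/2}k\to\infty$ using Lemma \ref{set-mr-contain-2}, and your proposal does not address it.
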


\begin{proof}
 For a fixed $\overline{r}$,   as in (\ref{upper bound}),   it is easy to see  that there exists  a uniform $C_1$  independent of $\overline{r}$  such that
\begin{align}\label{scalar-estimate-2}
R(x)\le C_1R(p_i),~\forall~x\in M_{p_i,\overline{r}\sqrt{R_{\max}}}
\end{align}
as long as $i$ is large enough.  By Lemma \ref{set-mr-contain-1}, it follows
\begin{align}
R_{g_{p_i}}(x)\le C_1,~\forall~x\in B(p_i,\overline{r};g_{p_i}),\notag
\end{align}
where $g_{p_i}=g_{p_i}(0)$.   Since the scalar curvature is increasing along the flow,  we get
\begin{align}
|{\rm Rm}_{g_{p_i}(t)}(x)|_{g_{p_i}(t)}&\le 3R_{g_{p_i}(t)}(x)\notag\\
&\le 3R_{g_{p_i}}(x)\le 3C_1,~\forall~x\in B(p_i,\overline{r};g_{p_i}),~t\in(-\infty,0].\notag
\end{align}
Thus  together with   the injective radius estimate in Proposition  \ref{volume estimate},  we  can apply the Hamilton compactness theorem to see that  $g_{p_i}(t)$ converges subsequently  to a limit flow $(\widetilde M, \tilde g(t); p_\infty)$ on  $t\in(-\infty,0]$ \cite{H1}.
Moreover,  the limit flow  has uniformly bounded curvature.  It remains to prove the splitting property.

By Remark \ref{rem-pointwise curvature estimate},   we have
$$|{\rm  Ric}|(x)\le CR(x),~\forall~x\in B(p_{i},\overline{r} ;  {g_{p_i}}).$$
It follows from (\ref{scalar-estimate-2}),
\begin{align}
|{\rm  Ric}|(x)\le CR(p_i),~\forall~x\in B(p_{i},\overline{r} ;  {g_{p_i}}).\notag
\end{align}
Let $X_{(i)}=R(p_{i})^{-\frac{1}{2}}\nabla f$.
Then
\begin{align}
\sup_{ B(p_{i},r_{0} ;  {g_{p_i}})}| \nabla_{(g_{p_i})}X_{(i)}|_{g_{p_i}}&= \sup_{ B(p_{i},r_{0} ;  {g_{p_i}})}\frac{|{\rm Ric}|}{\sqrt{R(p_{i})}}\notag\\
&\le C\sqrt{R(p_{i})} \to 0.\notag
\end{align}
On the other hand,  by Remark \ref{rem-pointwise curvature estimate},  we also have
$$\sup_{ B(p_{i},r_{0} ;  {g_{p_i}})}| \nabla^{m}_{(g_{p_i})}X_{(i)}|_{g_{p_i}}\leq C(n)\sup_{ B(p_{i},r_{0} ;  {g_{p_i}})}| \nabla^{m-1}_{(g_{p_i})}{\rm Ric}({g_{p_i})}|_{g_{p_i}}\le C_1.$$
Thus $X_{(i)}$ converges  subsequently  to a parallel  vector field $X_{(\infty)}$ on $(\widetilde M, \tilde g(0))$.
 Moreover,
 \begin{align}
 |X_{(i)}|_{g_{p_i}}( x)=|\nabla f|(p_{i})
=\sqrt{R_{\rm max}}+o(1)>0, ~\forall~ x\in B(p_{i},r_{0} ;  {g_{i}}),\notag
 \end{align}
as long as $f(p_i)$ is large enough.  This implies that $X_{(\infty)}$ is non-trivial.
 Hence,  $(\widetilde M,\widetilde{g}(t))$ locally splits off a piece of  line along $X_{(\infty)}$. It remains to show that  $X_{(\infty)}$
 generates a line through $p_\infty$.
 
 By Lemma \ref{set-mr-contain-2},
\begin{align}
M_{p_i,k}\subset B(p_i,2\pi\sqrt{B}+\frac{2k}{\sqrt{R_{\max}}} ; g_{p_i}(0)), ~\forall ~ p_i\to\infty.\notag
\end{align}
Let $\gamma_{i,k}(s)$, $s\in(-D_{i,k},E_{i,k})$ be an integral curve generated by $X_{(i)}$ through $p_i$, which  restricted in $M_{p,k}$. Then $\gamma_{i,k}(s)$ converges to a  geodesic $\gamma_\infty(s)$ generated by $X_{(\infty)}$ through $p_\infty$,  which restricted in
$B(p_\infty,2\pi\sqrt{B}+\frac{2k}{\sqrt{R_{\max}}};\widetilde{g}(0))$.  If let  $L_{i,k}$ be   lengths  of $\gamma_{i,k}(s)$
and $L_{\infty,k}$  length  of    $\gamma_\infty(s)$,
\begin{align}
L_{i,k}=&\int_{-D_{i,k}}^{E_{i,k}}|\nabla f|_{g_{p_i}(0)} ds=\int_{f(p_i)-\frac{k}{\sqrt{R(p_i)}}}^{f(p_i)+\frac{k}{\sqrt{R(p_i)}}}  \sqrt{R(p_i)} \|\nabla f\|_{g} \notag\\
&\ge  R_{max}^{-\frac{1}{2}} \int_{f(p_i)-\frac{k}{\sqrt{R(p_i)}}}^{f(p_i)+\frac{k}  {\sqrt{R(p_i)}}}  \sqrt{R(p_i)} \|\nabla f\|_{g}^2  ds\notag\\
&= R_{max}^{-\frac{1}{2}}      \int_{f(p_i)-\frac{k}{\sqrt{R(p_i)}}}^{f(p_i)+\frac{k}  {\sqrt{R(p_i)}}}  \sqrt{R(p_i)} df =2 R_{max}^{-\frac{1}{2}}k, \notag
\end{align}
and so, 
\begin{align}
L_{\infty,k}\ge \frac{1}{2}L_{i,k}\ge  R_{max}^{-\frac{1}{2}} k. \notag
\end{align}
 Thus  $X_{(\infty)}$ generates a line   $\gamma_\infty(s)$ through $p_\infty$ as $k\to \infty$. As a consequence,  $(\widetilde M,\widetilde{g}(0))$ splits off a  line 
 and so does the flow $(\widetilde M,\widetilde{g}(t); p_{\infty})$. The lemma is proved.

\end{proof}

Next we estimate the curvature of  $(N,g_{N}(t))$.

\begin{lem}\label{lem-for the main theorem-2}
Under  (\ref{curvature decay}),  there exists a constant $C$ independent of $t$ such that the scalar curvature $R_{N}(t)$ of $(N,g_{N}(t))$ satisfies
 \begin{align}\label{scalar curvature comparison}
 \frac{R_{N}(x,t)}{R_{N}(y,t)}\le C,~\forall~x,y\in N,~t\in(-\infty,0].
 \end{align}
\end{lem}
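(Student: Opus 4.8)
The goal is to show a Harnack-type ratio bound for the scalar curvature of the limit cross-section $(N, g_N(t))$, uniform in $t$. The natural route is to transfer the corresponding bound that we already possess upstairs on the soliton, via the convergence established in Lemma \ref{lem-for the main theorem-1}. First I would fix $t \le 0$ and two points $x, y \in N$. Since $(M, g_{p_i}(t), p_i) \to (\widetilde M, \widetilde g(t); p_\infty) = (\mathbb{R}\times N, ds\otimes ds + g_N(t); p_\infty)$ in the Cheeger-Gromov sense, there are points $x_i, y_i \in M$ with $x_i \to (0,x)$ and $y_i \to (0,y)$ in the limit, and $R_{g_{p_i}(t)}(x_i) \to R_N(x,t)$, $R_{g_{p_i}(t)}(y_i) \to R_N(y,t)$ (here $R_{\widetilde g(t)} = R_{g_N(t)}$ because the line factor is flat). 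So it suffices to bound $R_{g_{p_i}(t)}(x_i)/R_{g_{p_i}(t)}(y_i)$ uniformly in $i$; equivalently, in the unrescaled picture, to bound $R_{\phi_{s_i t}}$-values at the corresponding points of $M$, where the flow $g(t) = \phi_t^* g$ moves points along $-\nabla f$.

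The key point is that all these points lie in a controlled region. Since $x_i, y_i$ converge in $B(p_\infty, \overline r; \widetilde g(t))$ for some fixed $\overline r$ (independent of $i$ once $\overline r$ is chosen large enough to contain $x$ and $y$), they lie in $B(p_i, 2\overline r; g_{p_i}(t)) \subset B(p_i, 2\overline r; g_{p_i}(0))$ for large $i$, using that Ricci curvature is positive so geodesic balls shrink backward in time (as in the proof of Lemma \ref{lem-pointwise curvature estimate}). Then I would invoke Lemma \ref{set-mr-contain-1} together with the inclusion $B(p_i, c; g_{p_i}) \subset M_{p_i, k}$ for an appropriate $k = k(\overline r)$, so $x_i, y_i \in M_{p_i, k}$ at time $0$; and since the flow $\phi_\tau$ only increases $f$-sublevels in a controlled way (or simply because $M_{p_i,k}$ is $\phi_\tau$-invariant up to enlarging $k$), the backward-evolved points stay in $M_{p_i, k'}$ for a slightly larger $k'$. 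On $M_{p_i, k'}$ the estimates \eqref{upper bound} and \eqref{lower-bound} give $C_1^{-1} R(p_i) \le R(x) \le C_1 R(p_i)$ with $C_1 = C_1(k')$ uniform, hence $C_1^{-1} \le R_{g_{p_i}}(x) \le C_1$ after rescaling, and the same at earlier times by monotonicity of scalar curvature along the flow (it is nondecreasing, and bounded above on this set because the evolved points stay in a fixed $M_{p_i,k'}$). Dividing, $R_{g_{p_i}(t)}(x_i)/R_{g_{p_i}(t)}(y_i) \le C_1^2$, a bound independent of $i$ and $t$. Passing to the limit yields \eqref{scalar curvature comparison} with $C = C_1^2$.

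The main obstacle I anticipate is the bookkeeping that the \emph{time-$t$} backward-evolved points $\gamma$ (the trajectories under the Ricci-flow reparametrization $\phi_{R^{-1}(p_i)t}$) remain inside a fixed neighborhood $M_{p_i, k'}$ with $k'$ independent of $i$ and of $t\in(-\infty,0]$. For this I would argue as in Lemma \ref{set-mr-contain-2}: the $f$-displacement along $\phi_\tau$ over the relevant time interval is controlled because $|\nabla f|^2 = R_{\max} - R + o(1)$ is bounded, and the spatial size of level sets $\Sigma_l$ is controlled by the Myers-type diameter bound $\mathrm{diam}(\Sigma_l, g_{p_i}) \le 2\pi\sqrt{B}$ from \eqref{mr-set}; together these confine the whole backward trajectory through $x_i$ (resp. $y_i$) to $M_{p_i, k'}$ for $k'$ depending only on $\overline r$ and $B$. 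Once that confinement is in place, the scalar curvature comparison \eqref{upper bound}--\eqref{lower-bound} applies verbatim, and the rest is immediate. A secondary point to be careful about is that the convergence $R_{g_{p_i}(t)}(x_i) \to R_N(x,t)$ requires the curvature bounds of the limit flow already established in Lemma \ref{lem-for the main theorem-1}, which we may freely use.
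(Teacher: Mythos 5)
Your setup --- transferring the ratio bound to the soliton via the Cheeger--Gromov convergence of Lemma \ref{lem-for the main theorem-1} and identifying $R_{g_{p_i}(t)}(x_i)=R^{-1}(p_i)\,R\bigl(\phi_{R^{-1}(p_i)t}(x_i)\bigr)$ --- is exactly the paper's. But the core of your argument, confining the backward-evolved points to a fixed $M_{p_i,k'}$ and then bounding numerator and denominator separately by the two-sided estimates \eqref{upper bound} and \eqref{lower-bound}, has a genuine gap. The $f$-displacement of $x_i$ under $\phi_{R^{-1}(p_i)t}$ is $\int_{R^{-1}(p_i)t}^{0}|\nabla f|^2\,ds\approx R_{\max}|t|/R(p_i)$, whereas the half-width of $M_{p_i,k'}$ measured in $f$ is $k'/\sqrt{R(p_i)}$; their ratio is $R_{\max}|t|/(k'\sqrt{R(p_i)})\to\infty$ as $i\to\infty$ for any fixed $t<0$. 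So $M_{p_i,k}$ is very far from $\phi_\tau$-invariant on the relevant time scale, and no $k'$ independent of $i$ and $t$ exists. More fundamentally, the strategy of producing two-sided bounds $C_1^{-1}\le R_{g_{p_i}(t)}\le C_1$ with $C_1$ independent of $t$ cannot succeed: the limit flow is ultimately a shrinking cylinder with $R_N(\cdot,t)\sim(1-t)^{-1}\to 0$ as $t\to-\infty$, so no uniform positive lower bound holds; and monotonicity of $R$ along the flow gives $R_{g_{p_i}(t)}(x_i)\le R_{g_{p_i}(0)}(x_i)$, which is the wrong direction for bounding the denominator from below at earlier times.

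The paper gets the $t$-independence from a cancellation your argument misses: both evolved points sit at $f$-level $(1+o(1))\bigl(f(p_i)+R_{\max}|t|\,R^{-1}(p_i)\bigr)$, so although each travels far outside any fixed $M_{p_i,k}$, the \emph{ratio} $f(\phi_{R^{-1}(p_i)t}(x_i))/f(\phi_{R^{-1}(p_i)t}(y_i))$ tends to $1$. Combined with $C_1\le R(x)f(x)\le C_2$ for $f(x)\gg1$ (which follows from \eqref{curvature decay} and \eqref{inequality-cao chen}), this bounds the ratio of the two scalar curvatures by $C_2/C_1$, independently of $t$. You should replace the confinement step by this comparison of $f$-levels along the flow.
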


\begin{proof}
 Let $\widetilde{R}(x, t)$ be the scalar curvature of $(  \mathbb{R}\times N, \widetilde{g}(t))$. It suffices to prove the following is true:
 \begin{align}\label{curvature comparison}
 \frac{\widetilde{R}(x,t)}{\widetilde{R}(y,t)}\le C,~\forall~x,y\in\mathbb{R}\times N,~t\in(\infty,0],
 \end{align}
 for some constant $C$.   For any $x,y\in \mathbb{R}\times N$,  we choose  $\overline{r}>0$  such  that $x,y\in B(p_{\infty},\overline{r};\widetilde{g}(0))$. By the convergence of $g_{p_i}(t)$,  there are sequences  $\{x_i\}$ and $\{y_i\}$ in $ B(p_{i},\overline{r}; {g}_{p_i}(0))$  such that $x_i$ and $y_i$ converge to $x$ and $y$ in the Cheeger-Gromov sense, respectively.   By Lemma \ref{set-mr-contain-1}, we have
  \begin{align}
  x_i,y_i\subseteq B(p_{i},\overline{r}; {g}_{p_i}(0))\subseteq M_{p_i,\overline{r}\sqrt{R_{\max}}}.\notag
  \end{align}
  Thus
  \begin{align}\label{xi-yi}
  f(x_i)= (1+o(1))f(p_i)~{\rm and}~  f(y_i)=(1+o(1))f(p_i),  ~{\rm as}~p_i\to \infty.
  \end{align}

  On the other hand,  for a  fixed $t<0$,
 \begin{align}
\frac{f(\phi_{R^{-1}(p_i)t}(x_i))-f(x_i)}{|R^{-1}(p_i)t|}=\frac{\int_{R^{-1}(p_i)t}^0  |\nabla f|^2ds}{|R^{-1}(p_i)t|}\rightarrow R_{\max},~{\rm as}~p_i\to\infty\notag
 \end{align}
and
\begin{align} \frac{f(\phi_{R^{-1}(p_i)t}(y_i))-f(y_i)}{|R^{-1}(p_i)t|}=\frac{\int_{R^{-1}(p_i)t}^0  |\nabla f|^2ds}{|R^{-1}(p_i)t|}\rightarrow R_{\max},~{\rm as}~p_i\to\infty.\notag
 \end{align}
 By (\ref{xi-yi})  and the fact
 \begin{align}
 C_1\le R(x)f(x)\le C_2,~\forall~f(x)>>1, \notag
 \end{align}
 we get
 $$\frac{f(\phi_{R^{-1}(p_i)t}(x_i))}{f(\phi_{R^{-1}(p_i)t}(y_i))}\to 1,~{\rm as}~p_i\to\infty.$$
It follows
\begin{align}
 \frac{R(\phi_{R^{-1}(p_i)t}(x_i))}{R(\phi_{R^{-1}(p_i)t}(y_i))}\le \frac{C_2}{C_1}. \notag
 \end{align}
Hence we obtain
\begin{align}  \frac{R_{N}(x,t)}{R_{N}(y,t)} &=\lim_{i\to\infty}   \frac{   R^{-1}(p_i) R(x_i, R^{-1}(p_i)t)) }{   R^{-1}(p_i) R( y_i,  R^{-1}(p_i)t )}\notag\\
&= \lim_{i\to\infty}   \frac{   R^{-1}(p_i) R(\phi_{R^{-1}(p_i)t}(x_i))}{   R^{-1}(p_i) R(\phi_{R^{-1}(p_i)t}(y_i))}\le \frac{C_2}{C_1}.\notag
 \end{align}
 This proves  (\ref{curvature comparison}).

\end{proof}

The proof of Theorem  \ref{main-theorem} is completed by the following lemma.

\begin{lem}
$(N,g_{N}(t))$ in Lemma \ref{lem-for the main theorem-1} is a shrinking spheres  flow.  Namely,
$$(N,g_{N}(t))=(\mathbb{S}^2,(2-2t)g_{\mathbb{S}^{2}}).$$
\end{lem}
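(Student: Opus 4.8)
The plan is to use the structural information already accumulated: by Lemma \ref{lem-for the main theorem-1} the limit flow $(\widetilde M,\widetilde g(t))$ splits isometrically as $(\mathbb R\times N,\,ds\otimes ds+g_N(t))$ with $(N,g_N(t))$ an ancient solution of the Ricci flow, and by Lemma \ref{lem-for the main theorem-2} the scalar curvature of $g_N(t)$ is uniformly pinched, $R_N(x,t)/R_N(y,t)\le C$ for all $x,y\in N$ and all $t\le 0$. First I would pin down the topology and dimension of $N$: since $M$ is $3$-dimensional, $N$ is a $2$-dimensional surface; and because $(M,g)$ has positive sectional curvature (Lemma \ref{positive curvature}), the limit has nonnegative sectional curvature, so $N$ has nonnegative (hence, by the pinching, strictly positive away from the trivial case) Gauss curvature. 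Moreover $N$ must be compact: the level sets $\Sigma_{f(p_i)}$ have diameter bounded by $2\pi\sqrt{B/R(p_i)}$ in the $g$-metric (from the proof of Lemma \ref{set-mr-contain-2}), hence uniformly bounded diameter in the rescaled metric $g_{p_i}$, and they converge to slices $\{s\}\times N$; so $N$ is a closed surface, and being a quotient-free limit with positive curvature it is $\mathbb S^2$ (a compact ancient Ricci flow on a surface with positive curvature cannot be $\mathbb{RP}^2$ here since the $\Sigma_r$ are spheres by Corollary \ref{equilibrium-existence}).

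Next I would invoke the Daskalopoulos--Hamilton--Sesum classification \cite{DHS} of ancient solutions to the Ricci flow on a compact surface: any such solution with positive curvature is, up to scaling and diffeomorphism, either the round shrinking sphere $(\mathbb S^2,(2-2t)g_{\mathbb S^2})$ or the King--Rosenau (``sausage'') solution. So the remaining task is to rule out the King--Rosenau solution. The decisive point is the pinching estimate from Lemma \ref{lem-for the main theorem-2}: the King--Rosenau ancient solution is \emph{not} uniformly curvature-pinched — as $t\to-\infty$ it looks like two distant cigar-like tips joined by a long thin neck, so $\sup_N R_N(\cdot,t)/\inf_N R_N(\cdot,t)\to\infty$. (Equivalently, at a fixed $t$ near $-\infty$ the ratio $R_N(\text{tip})/R_N(\text{equator})$ is unbounded.) Since our $g_N(t)$ satisfies $R_N(x,t)/R_N(y,t)\le C$ uniformly for all $t\le 0$, it cannot be the King--Rosenau solution, and therefore $(N,g_N(t))$ must be the round shrinking sphere. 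Normalizing the scale by the convergence condition $R(p_{i_k})R^{-1}(p_{i_k})=1$, i.e.\ $R_{g_N}(\cdot,0)=1$ on each factor (so the $\mathbb S^2$ has Gauss curvature giving $g_N(0)=2g_{\mathbb S^2}$), we get exactly $(N,g_N(t))=(\mathbb S^2,(2-2t)g_{\mathbb S^2})$.

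The main obstacle is the last step: cleanly ruling out the King--Rosenau solution via the uniform pinching. One must be careful that the DHS list is stated under hypotheses (compactness, positive curvature, defined for all $t\in(-\infty,0)$) that our $(N,g_N(t))$ genuinely satisfies, and that ``uniformly pinched for all $t\le 0$'' is the correct quantity that distinguishes the round sphere from King--Rosenau — this requires recalling the explicit asymptotics of the sausage solution (its curvature ratio blows up as $t\to-\infty$), which is where I would spend the care. A secondary, more routine point is verifying that the Cheeger--Gromov limit of the embedded spheres $\Sigma_{f(p_{i_k})}$ is precisely the slice $N$ with its induced metric, so that the pinching of $R$ on $M$ transfers to the pinching of $R_N$ used above; this is essentially contained in Lemma \ref{lem-for the main theorem-2} and the diameter bound of Lemma \ref{set-mr-contain-2}. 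Once $N=\mathbb S^2$ with the round shrinking metric, Theorem \ref{main-theorem} follows by Brendle's theorem, since property (ii) of Definition \ref{brendle} is then verified.
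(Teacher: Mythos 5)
Your proposal is correct and follows essentially the same route as the paper: apply the Daskalopoulos--Hamilton--Sesum classification to the compact ancient surface flow $(N,g_N(t))$ and rule out the King--Rosenau (sausage) solution because its curvature ratio blows up as $t\to-\infty$, contradicting the uniform pinching of Lemma \ref{lem-for the main theorem-2}, then normalize using $\widetilde R(p_\infty,0)=1$. The paper simply makes the Rosenau exclusion explicit by writing down $R_{h(t)}$ in closed form, which is the one computation you flagged as needing care.
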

\begin{proof}

By Lemma \ref{lem-for the main theorem-2}, the Gauss curvature of $(N,g_{N}(0))$ has a uniform positive lower bound. Then  $N$ is compact by Myer's Theorem. On the other  hand,  by a classification theorem  of Daskalopoulos-Hamilton-Sesum \cite{DHS},  an  ancient solution on a compact surface  $N$ is either  a shrinking spheres  flow or a  Rosenau solution. The Rosenau solution is obtained by compactifying $(\mathbb{R}\times \mathbb{S}^1(2),h(x,\theta,t)=u(x,t)(dx^2+d\theta^2))$ by adding two points, where $u(x,t)=\frac{\sinh (-t)}{\cosh(x)+\cosh(t)}$ and $t\in (-\infty,0)$. By  a direct computation,
\begin{align}
R_{h(t)}=\frac{\cosh(t)\cosh(x)+1}{\sinh(-t)(\cosh(x)+\cosh(t))}.
\end{align}
It is easy to check that $R_{h(t)}$ doesn't satisfy (\ref{scalar curvature comparison}) in Lemma \ref{lem-for the main theorem-2} as $t\to-\infty$. Hence,  $(N,g_{N}(t))$   must  be a shrinking  spheres   flow on $\mathbb S^2$.  Note that $\widetilde{R}(p_{\infty},0)=1$.  Then it is easy to see that  $g_{\mathbb{S}^{2}}(t)=(2-2t)g_{\mathbb{S}^{2}}$.

\end{proof}

\vskip30mm

\section*{References}

\small

\begin{enumerate}

\renewcommand{\labelenumi}{[\arabic{enumi}]}

\bibitem{Br1} Brendle, S., \textit{Rotational symmetry of self-similar solutions to the Ricci flow}, Invent. Math. , \textbf{194} No.3 (2013), 731-764.

\bibitem{Br2} Brendle, S., \textit{Rotational symmetry of Ricci solitons in higher dimensions}, J. Diff. Geom., \textbf{97} (2014), no. 2, 191-214.

\bibitem{Ca2} Cao, H.D., \textit{Existence of gradient K\"{a}hler-Ricci solitons}, Elliptic and parabolic methods in geometry (Minneapolis, MN, 1994), 1-16, A K Peters, Wellesley, MA, 1996.

\bibitem{CaCh} Cao, H.D. and Chen, Q., \textit{On locally conformally flat gradient steady Ricci solitons},
Trans. Amer. Math. Soc., \textbf{364} (2012), 2377-2391 .

\bibitem{C-He} Cao, H.D. and He, C.X., \textit{Infinitesimal rigidity of collapsed gradient steady Ricci solitons in dimension three}, arXiv:math/1412.2714v1.


\bibitem{CGT} Cheeger, J., Gromov, M. and Taylor, M., \textit{Finite propagation speed, kernel estimates for functions of the Laplace operator, and the geometry of complete Riemannian manifolds}, J. Diff. Geom., \textbf{17} (1982), 15-53.

\bibitem{Ch} Chen, B.L., \textit{Strong uniqueness of the Ricci flow},  J. Diff.  Geom. \textbf{82} (2009),  363-382.


\bibitem{DHS} Daskalopoulos, P., Hamilton, R. and Sesum, N., \textit{Classification of ancient compact solutions to the Ricci flow on surfaces}, J. Diff. Geom. \textbf{91} (2012), no. 2, 171-214.


\bibitem{DZ2} Deng, Y.X. and Zhu, X.H., \textit{Asymptotic behavior of positively curved steady Ricci solitons}, arXiv:math/1507.04802.

 \bibitem{DZ3} Deng, Y.X. and Zhu, X.H.,  \textit{ Steady Ricci solitons  with horizontally $\epsilon$-pinched  Ricci curvature }, arXiv:math/1601.02111.

\bibitem{DZ4} Deng, Y.X. and Zhu, X.H.,  \textit{Asymptotic behavior of positively curved steady Ricci solitons, II}, arXiv:math/1604.00142.



\bibitem{H1} Hamilton, R.S., \textit{Formation of singularities in the Ricci flow}, Surveys in Diff. Geom., \textbf{2} (1995),
7-136.

\bibitem{Pe1} Perelman, G., \textit{The entropy formula for the Ricci flow and its geometric applications}, arXiv:math/0211159.


\bibitem{S2} Shi, W.X., \textit{Complete noncompact three-manifolds  with nonnegative Ricci curvature}, J. Diff. Geom. \textbf{29} (1989), no.2, 353-360

\bibitem{S1} Shi, W.X., \textit{Ricci deformation of the metric on complete noncompact Riemannian
manifolds}, J. Diff. Geom., \textbf{30} (1989), 223-301.

\end{enumerate}

\end{document}